\title{On bi-free De Finetti theorems}
\author{Amaury Freslon}
\author{Moritz Weber}
\keywords{Quantum groups, free probability, De Finetti theorem}
\subjclass[2010]{46L54, 46L53, 20G42}
\address{Saarland University, Fachbereich Mathematik, Postfach 151159, 66041 Saarbr\"ucken, Germany}
\email{freslon@math.uni-sb.de, weber@math.uni-sb.de}
\date{}
\theoremstyle{plain}
\newtheorem{thm}{Theorem}[section]
\newtheorem{prop}[thm]{Proposition}
\newtheorem{lem}[thm]{Lemma}
\theoremstyle{definition}
\newtheorem{de}[thm]{Definition}
\newtheorem{ex}[thm]{Example}
\theoremstyle{remark}
\newtheorem{rem}[thm]{Remark}
\DeclareMathOperator{\E}{E}
\DeclareMathOperator{\Id}{Id}
\DeclareMathOperator{\ii}{id}
\newcommand{\A}{\mathcal{A}}
\newcommand{\B}{\mathcal{B}}
\newcommand{\C}{\mathbb{C}}
\newcommand{\CC}{\mathcal{C}}
\newcommand{\D}{\Delta}
\newcommand{\EE}{\mathbb{E}}
\newcommand{\G}{\mathbb{G}}
\newcommand{\LL}{\mathcal{L}}
\newcommand{\M}{\mathcal{M}}
\newcommand{\N}{\mathbb{N}}
\newcommand{\NN}{\mathcal{N}}
\begin{document}

\setlength{\unitlength}{0.5cm}
\newsavebox{\partpi}
\savebox{\partpi}
{ \begin{picture}(9,3)
 \put(0,2){\line(0,1){1}}
 \put(1,2){\line(0,1){1}}
 \put(2,2){\line(0,1){1}}
 \put(3,0){\line(0,1){3}}
 \put(4,2){\line(0,1){1}}
 \put(5,1){\line(0,1){2}}
 \put(6,0){\line(0,1){3}}
 \put(7,1){\line(0,1){2}}
 \put(8,2){\line(0,1){1}}
 \put(0,2){\line(1,0){1}}
 \put(2,2){\line(1,0){6}}
 \put(3,0){\line(1,0){3}}
 \put(5,1){\line(1,0){2}}
 \put(-0.1,3.2){$\ell$}
 \put(0.9,3.2){$r$}
 \put(1.9,3.2){$\ell$}
 \put(2.9,3.2){$\ell$}
 \put(3.9,3.2){$r$}
 \put(4.9,3.2){$r$}
 \put(5.9,3.2){$\ell$}
 \put(6.9,3.2){$r$}
 \put(7.9,3.2){$r$}
 \end{picture}}
 \newsavebox{\partpinumber}
\savebox{\partpinumber}
{ \begin{picture}(9,3)
 \put(0,2){\line(0,1){1}}
 \put(1,2){\line(0,1){1}}
 \put(2,2){\line(0,1){1}}
 \put(3,0){\line(0,1){3}}
 \put(4,2){\line(0,1){1}}
 \put(5,1){\line(0,1){2}}
 \put(6,0){\line(0,1){3}}
 \put(7,1){\line(0,1){2}}
 \put(8,2){\line(0,1){1}}
 \put(0,2){\line(1,0){1}}
 \put(2,2){\line(1,0){6}}
 \put(3,0){\line(1,0){3}}
 \put(5,1){\line(1,0){2}}
 \put(-0.1,3.2){$1$}
 \put(0.9,3.2){$2$}
 \put(1.9,3.2){$3$}
 \put(2.9,3.2){$4$}
 \put(3.9,3.2){$5$}
 \put(4.9,3.2){$6$}
 \put(5.9,3.2){$7$}
 \put(6.9,3.2){$8$}
 \put(7.9,3.2){$9$}
 \end{picture}}
\newsavebox{\partpullleftright}
\savebox{\partpullleftright}
{ \begin{picture}(9,2)
 \put(0,0){\line(0,1){2}}
 \put(1,0){\line(3,2){3}}
 \put(2,0){\line(-1,2){1}}
 \put(3,0){\line(-1,2){1}}
 \put(4,0){\line(1,2){1}}
 \put(5,0){\line(1,2){1}}
 \put(6,0){\line(-3,2){3}}
 \put(7,0){\line(0,1){2}}
 \put(8,0){\line(0,1){2}}
 \end{picture}}
\newsavebox{\partreverseorder}
\savebox{\partreverseorder}
{ \begin{picture}(9,3)
 \put(0,0){\line(0,1){3}}
 \put(1,0){\line(0,1){3}}                        
 \put(2,0){\line(0,1){3}}                        
 \put(3,0){\line(0,1){3}}                        
 \put(4,0){\line(4,3){4}}
 \put(5,0){\line(2,3){2}}
 \put(6,0){\line(0,1){3}}
 \put(7,0){\line(-2,3){2}}
 \put(8,0){\line(-4,3){4}} 
 \put(-0.1,3.2){1}
 \put(0.9,3.2){2}
 \put(1.9,3.2){3}
 \put(2.9,3.2){4}
 \put(3.9,3.2){5}
 \put(4.9,3.2){6}
 \put(5.9,3.2){7}
 \put(6.9,3.2){8}
 \put(7.9,3.2){9}
 \end{picture}}
\newsavebox{\partpermutedpi}
\savebox{\partpermutedpi}
{ \begin{picture}(9,3)
 \put(0,0){\line(0,1){3}}
 \put(1,1){\line(0,1){2}}
 \put(2,2){\line(0,1){1}}
 \put(3,2){\line(0,1){1}}
 \put(4,1){\line(0,1){2}}
 \put(5,2){\line(0,1){1}}
 \put(6,2){\line(0,1){1}}
 \put(7,1){\line(0,1){2}}
 \put(8,0){\line(0,1){3}}
 \put(0,0){\line(1,0){8}}
 \put(1,1){\line(1,0){6}}
 \put(2,2){\line(1,0){1}}
 \put(5,2){\line(1,0){1}}
 \put(-0.1,3.2){1}
 \put(0.9,3.2){2}
 \put(1.9,3.2){3}
 \put(2.9,3.2){4}
 \put(3.9,3.2){5}
 \put(4.9,3.2){6}
 \put(5.9,3.2){7}
 \put(6.9,3.2){8}
 \put(7.9,3.2){9}
 \end{picture}}

\begin{abstract}
We investigate possible generalizations of the de Finetti theorem to bi-free probability. We first introduce a twisted action of the quantum permutation groups corresponding to the combinatorics of bi-freeness. We then study properties of families of pairs of variables which are invariant under this action, both in the bi-noncommutative setting and in the usual noncommutative setting. We do not have a completely satisfying analogue of the de Finetti theorem, but we have partial results leading the way. We end with suggestions concerning the symmetries of a potential notion of $n$-freeness.
\end{abstract}

\maketitle

\section{Introduction}

D.V. Voiculescu defined in \cite{voiculescu2014free} a notion of freeness for pairs of noncommutative random variables, called \emph{bi-freeness}. Consider pairs of operators $(T_{i}^{\ell}, T_{i}^{r})_{i}$ both acting on a pointed Hilbert space $(H_{i}, \xi_{i})$. If $H = \ast_{i}H_{i}$ is the reduced (with respect to the vectors $\xi_{i}$) free product Hilbert space, then $\B(H_{i})$ can be represented on $H$ by letting the operators act on the \emph{leftmost} tensor if it is in $H_{i}$. Similarly, one can represent $\B(H_{i})$ on $H$ by letting the operators act on the \emph{rightmost} tensor if it is in $H_{i}$. If $\lambda_{i}$ (resp. $\rho_{i}$) denote this left (resp. right) representation, we can then consider the joint distribution of the family of operators $(\lambda_{i}(T_{i}^{\ell}), \rho_{i}(T_{i}^{r}))_{i}$ with respect to the vacuum expectation. If $i\neq j$, the operators $\lambda_{i}(T_{i}^{\ell})$ and $\rho_{j}(T_{j}^{r})$ are classically independent, while all those acting on the same side of $H$ but on different Hilbert spaces are free. A family of pairs is said to be bi-free if its joint distribution can be modelled in this way.

Building on methods from free probability, D.V. Voiculescu proved in \cite{voiculescu2014free} and \cite{voiculescu2013free} several fundamental results for "bi-free probability" and in particular a central limit theorem. However, he had no combinatorial description of bi-freeness and some of the constructions (like the universal polynomials for moments) were not explicit. In \cite{mastnak2014double}, M. Mastnak and A. Nica introduced combinatorial objects called \emph{bi-noncrossing partitions}. They associated a family of \emph{$(\ell, r)$-cumulants} to them and conjectured that bi-freeness was equivalent to the vanishing of these mixed cumulants. This conjecture was later proved by I. Charlesworth, B. Nelson and P. Skoufranis in \cite{charlesworth2014two}. Afterwards, the same authors endeavoured to study the operator-valued setting for bi-freeness in \cite{charlesworth2014combinatorics}. This work is highly technical, but they were able to generalize many basic results from operator-valued free probability theory to the setting of pairs of random variables. With this whole framework available, it is natural to start investigating further topics in bi-free probability.

One possible direction is to study quantum symmetries and in particular generalizations of the de Finetti theorem. It was proved by C. Köstler and R. Speicher in \cite{kostler2009noncommutative} that an infinite family of noncommutative random variables is free and identically distributed with amalgamation over a subalgebra if and only if it is invariant under a natural action of the quantum permutation group. A version for boolean independence was also developed by W. Liu in \cite{liu2014noncommutative}, involving a \emph{quantum semigroup} generalizing the quantum permutation group.

In the present paper, we study possible generalizations of the de Finetti theorem to the setting of bi-freeness. Like for freeness, the role of the quantum symmetries is played by the quantum permutation group, but its action is changed. It is twisted so that it matches the specific combinatorics of bi-noncrossing partitions. This yields quantum symmetries in the sense that a family of pairs which is bi-free and identically distributed with amalgamation is invariant under the twisted action. A de Finetti theorem should then be a converse to this statement and we will therefore study the consequences of invariance under the twisted action of the quantum permutation group. We will see however that it is quite unclear what the statement of a bi-free de Finetti theorem should be.

Let us briefly outline the content of this paper. In Section \ref{sec:preliminaries} we recall necessary background concerning bi-freeness with amalgamation and the quantum permutation group. Section \ref{sec:DeFinetti} is divided into three parts. We first introduce in Subsection \ref{subsec:linearaction} the twisted action of the quantum permutation group and prove the easy way of the de Finetti theorem. Then, we prove and discuss in Subsection \ref{subsec:weakdefinetti} a weak converse of this statement in the setting of operator-valued bi-freeness. Eventually, we address in Subsection \ref{subsec:quantumbiinvariant} the general problem of characterizing infinite families of pairs which are quantum bi-invariant. In particular, we will detail the main difficulty arising there, that is producing a $B$-$B$-noncommutative probability space out of a $B$-noncommutative probability space. In the last short Section \ref{sec:nfreeness}, we explain how quantum bi-exchangeability can naturally be extended to quantum symmetries of $n$-tuples of operators. This gives some clues on what the combinatorics of $n$-freeness should be, if such a notion exists.

\subsection*{Acknowledgements}

The first author is supported by the ERC advanced grant "Noncommutative distributions in free probability". The authors thank W. Liu for pointing out a gap in the first version of this paper and K. Dykema, C. Köstler and P. Skoufranis for discussions on topics linked to this work.

\section{Preliminaries}\label{sec:preliminaries}

\subsection{Bi-free probability}\label{subsec:bifreeness}

This subsection is devoted to recalling basic facts concerning opera\-tor-valued bi-free probability as developed in \cite{charlesworth2014combinatorics} assuming that the reader has some basic knowledge in free probability. Let us start by defining the abstract setting. We will denote by $B^{op}$ the opposite algebra of $B$, i.e. with the reversed product.

\begin{de}\label{de:bbspace}
A \emph{$B$-$B$-noncommutative probability space} is a triple $(\A, \E, \varepsilon)$, where $\A$ and $B$ are unital algebras, $\varepsilon : B\otimes B^{op} \rightarrow \A$ is a unital homomorphism whose restrictions to $B\otimes 1$ and $1\otimes B^{op}$ are injective and $\E : A\rightarrow B$ is a linear map satisfying
\begin{eqnarray*}
\EE(\varepsilon(b_{1}\otimes b_{2})T) & = & b_{1}\EE(T)b_{2} \\
\EE(T\varepsilon(b\otimes 1)) & = & \EE(T\varepsilon (1\otimes b))
\end{eqnarray*}
for all $b, b_{1}, b_{2}\in B$ and $T\in \A$. In this context, the \emph{left} and \emph{right} subalgebras of $\A$ are defined as
\begin{eqnarray*}
\A_{\ell} & = & \{T\in \A, T\varepsilon(1\otimes b) = \varepsilon(1\otimes b)T \text{ for all } b\in B^{op}\} \\
\A_{r} & = & \{T\in \A, T\varepsilon(b\otimes 1) = \varepsilon(b\otimes 1)T \text{ for all } b\in B\}.
\end{eqnarray*}
\end{de}

Note that elements in the left algebra commute with the right copy of $B$ (i.e. $\varepsilon(1\otimes B^{op})$) and elements in the right algebra commute with the left copy of $B$ (i.e. $\varepsilon(B\otimes 1)$). The second compatibility condition in Definition \ref{de:bbspace} may seem surprising since it has no counterpart in the definition of a usual operator-valued probability space. It comes from the following concrete example of a $B$-$B$-noncommutative probability space :

\begin{ex}
A \emph{$B$-$B$-bimodule with specified $B$-vector state} is a triple $(X, \mathring{X}, P)$, where $X$ is a direct sum of $B$-$B$-bimodules
\begin{equation*}
X = B\oplus\mathring{X}
\end{equation*}
and $P : X\rightarrow B$ is the linear map $b\oplus x\mapsto b$. There is a morphism from $B\otimes B^{op}$ to the space $\LL(X)$ of all linear maps on $X$ defined by $\varepsilon(b_{1}\otimes b_{2}) = L_{b_{1}}R_{b_{2}} = R_{b_{2}}L_{b_{1}}$, where $L$ and $R$ denote respectively the left and right action of $B$ on $X$. Let us define a map $\E_{B} : \LL(X)\rightarrow B$ by
\begin{equation*}
\E_{B}(T) = P(T(1_{B}\oplus 0)).
\end{equation*}
Then, $(\LL(X), \E_{B}, \varepsilon)$ is a $B$-$B$-noncommutative probability space. Its left and right algebras are denoted respectively by $\LL_{\ell}(X)$ and $\LL_{r}(X)$. Note that by construction, $\E_{B}(T\varepsilon(b\otimes 1)) = \E_{B}(TL_{b}) = \E_{B}(TR_{b}) = \E_{B}(\varepsilon(1\otimes b))$ for all $b\in B$.
\end{ex}

By \cite[Thm 3.2.4]{charlesworth2014combinatorics}, this example is canonical in the sense that any $B$-$B$-noncommutative probability space can be faithfully represented as operators on a $B$-$B$-bimodule with specified $B$-vector state. Moreover, these objects admit a natural free product construction.

\begin{de}
Let $(X_{j}, \mathring{X}_{j}, P_{j})_{j}$ be a family of $B$-$B$-bimodules with specified $B$-vector states. The vector space
\begin{equation*}
\mathring{X} = \sum_{k=1}^{+\infty}\bigoplus_{i_{1}\neq \dots\neq i_{k}}\mathring{X}_{i_{1}}\underset{B}{\otimes} \dots\underset{B}{\otimes} \mathring{X}_{i_{k}}
\end{equation*}
inherits a $B$-$B$-bimodule structure so that, setting $X = B\oplus \mathring{X}$ and $P(b\oplus x) = b$, we have a $B$-$B$-bimodule with specified $B$-vector state $(X, \mathring{X}, P)$, called the \emph{free product} of the family $(X_{j}, \mathring{X}_{j}, P_{j})_{j}$.
If $\mathring{X}(j)$ denotes the direct sum of all the tensor products
\begin{equation*}
\mathring{X}_{i_{1}}\underset{B}{\otimes} \dots\underset{B}{\otimes} \mathring{X}_{i_{k}}
\end{equation*}
with $i_{1}\neq j$, there is a natural isomorphism
\begin{equation*}
V_{j} : X_{j}\underset{B}{\otimes} (B\oplus\mathring{X}(j)) \longrightarrow X.
\end{equation*}
Using it, we can define the \emph{left representation} $\lambda_{j} : \LL(X_{j})\rightarrow \LL_{\ell}(X)$ by
\begin{equation*}
\lambda_{j}(T) = V_{j}(T\otimes \ii)V_{j}^{-1}.
\end{equation*}
One can similarly define the \emph{right representation} $\rho_{j} : \LL(X_{j})\rightarrow \LL_{r}(X)$.
\end{de}

We are now ready for the definition of bi-freeness with amalgamation.

\begin{de}\label{de:bifreeness}
A pair of algebras $(C^{\ell}, C^{r})$ in a $B$-$B$-noncommutative probability space $(\A, \EE, \varepsilon)$ is a \emph{pair of $B$-faces} if
\begin{equation*}
\begin{array}{ccccc}
\varepsilon(B\otimes 1) & \subset & C^{\ell} & \subset & \A_{\ell} \\
\varepsilon(1\otimes B^{op}) & \subset & C^{r} & \subset & \A_{r}. \\
\end{array}
\end{equation*}
A pair of random variables $(x^{\ell}, x^{r})$ is a \emph{$B$-pair} if the algebra generated by $x^{\ell}$ and $\varepsilon(B\otimes 1)$ and the algebra generated by $x^{r}$ and $\varepsilon(1\otimes B^{op})$ form a pair of $B$-faces, or equivalently if $x^{\ell}\in \A_{\ell}$ and $x^{r}\in \A_{r}$.

A family of pairs of $B$-faces $(C_{j}^{\ell}, C_{j}^{r})_{j}$ is said to be \emph{bi-free with amalgamation over $B$} if there exist $B$-$B$-bimodules with specified $B$-vector states $(X_{j}, \mathring{X}_{j}, p_{j})$ for each $j$ together with $B$-morphisms
\begin{eqnarray*}
\ell_{j} : C_{j}^{\ell} & \rightarrow & \LL_{\ell}(X_{j}) \\
r_{j} : C_{j}^{r} & \rightarrow & \LL_{r}(X_{j})
\end{eqnarray*}
such that the joint distribution of  $(C_{j}^{\ell}, C_{j}^{r})_{j}$ with respect to $\E$ is the same as the joint distribution of  $(\lambda_{j}\circ\ell_{j}(C_{j}^{\ell}), \rho_{j}\circ r_{j}(C_{j}^{r}))_{j}$ with respect to the expectation $\E_{B}$ on $\LL(\ast_{j}X_{j})$. A family of $B$-pairs $(x_{j}^{\ell}, x_{j}^{r})_{j}$ is said to be bi-free if the family of pairs of $B$-faces that they generate are bi-free.
\end{de}

The proof of de Finetti theorems in free probability usually involves the combinatorial structure of the joint distributions of the variables. From this point of view, Definition \ref{de:bifreeness} is not the best suited. We therefore now recall from \cite{charlesworth2014combinatorics} the equivalent combinatorial description of bi-freeness with amalgamation. For freeness, the combinatorics are ruled by the so-called \emph{noncrossing partitions} (see for instance \cite{nica2006lectures} for a detailed account). Since they will also play a crucial role in this work, we recall some definitions.

\begin{de}
By a \emph{partition} we mean a partition $\pi$ of the finite set $\{1, \dots, n\}$ for some $n\in \N$ into a family of disjoint subsets whose union is the whole set. Each of these subsets will be called a \emph{block} of $\pi$. If $\pi$ and $\sigma$ are two partitions of the same set, we write $\pi\leqslant \sigma$ if each block of $\pi$ is contained in a block of $\sigma$. A partition $\pi$ is said to be \emph{crossing} if there exist four integers $i_{1} < i_{2} < i_{3} < i_{4}$ such that $i_{1}, i_{3}$ are in the same block, $i_{2}, i_{4}$ are in the same block but the four integers are not in the same block. Otherwise, the partition is said to be \emph{noncrossing}.
\end{de}

We may represent a partition $\pi$ by drawing $n$ points on a row, labelled by integers from left to right, and connecting by a line the points whose labels are in the same block of $\pi$. For instance the partition $\pi = \{\{1, 2\}, \{3, 5, 9\}, \{4, 7\}, \{6, 8\}\}$ of $\{1, \dots, 9\}$ will be drawn as
\begin{center}
\begin{picture}(13,4)
\put(0,1){$\pi = $}
\put(3,0){\usebox{\partpinumber}}
\end{picture}
\end{center}

Given any monomial in operators belonging to pairs of faces, we can consider the associated "left-right colouring", i.e. a sequence of $\ell$ and $r$'s. This sequence $\chi$ gives rise to a permutation $s_{\chi}$ which will be the crucial combinatorial tool. In fact, the correct family of partitions to consider is noncrossing partitions "twisted" by $s_{\chi}$. The precise procedure was discovered by M. Mastnak and A. Nica in \cite{mastnak2014double} :

\begin{de}\label{de:binoncrossing}
Let $n\in \N$, let $\chi\in \{\ell, r\}^{n}$ (which will be seen as a function $\{1, \dots, n\}\rightarrow \{\ell, r\}$) and set
\begin{eqnarray*}
\chi^{-1}(\ell) & = & \{i_{1} < \dots < i_{k}\} \\
\chi^{-1}(r) & = & \{i_{k+1} > \dots > i_{n}\}.
\end{eqnarray*}
We define a permutation $s_{\chi}\in S_{n}$ by $s_{\chi}(t) = i_{t}$. A partition $\pi$ is said to be \emph{bi-noncrossing relative to $\chi$} if $s_{\chi}^{-1}(\pi)$ (the partition obtained by applying $s_{\chi}^{-1}$ to the blocks of $\pi$, see \cite[Def 2.1.1]{charlesworth2014combinatorics}) is a noncrossing partition. The set of bi-noncrossing partitions relative to $\chi$ will be denoted by $BNC(\chi)$. The partition in $BNC(\chi)$ with only one block will be denoted by $1_{\chi}$.
\end{de}

We give a pictorial example for this, which is taken from \cite[Ex 5.1.2]{charlesworth2014combinatorics}.

\begin{ex}\label{ex:binc}
Let $\chi=\{\ell, r, \ell, \ell, r, r, \ell, r, r\}$ and let $\pi = \{\{1, 2\}, \{3, 5, 9\}, \{4, 7\}, \{6, 8\}\}\in BNC(\chi)$. The permutation $s_{\chi}$ is given by
\begin{equation*}
\left(
\begin{array}{ccccccccc}
1 & 2 & 3 & 4 & 5 & 6 & 7 & 8 & 9 \\
1 & 3 & 4 & 7 & 9 & 8 & 6 & 5 & 2
\end{array}
\right).
\end{equation*}
Then, $s_{\chi}^{-1}(\pi) = p = \{\{1, 9\}, \{2, 5, 8\}, \{3, 4\}\}, \{6, 7\}\}\in NC(9)$ and $\pi$ is bi-noncrossing relative to $\chi$. This equality can be seen on partitions in the following way : on top of $\pi$, we draw lines pulling all the left points to the left and all the right points to the right. Then, we keep the left lines straight while we permute all the right ones.
\begin{center}
\begin{picture}(13,10)
\put(0.5,1){$\pi = $}
\put(2,0){\usebox{\partpi}}
\put(0.5,6){$s_{\chi} = $}
\put(2,4){\usebox{\partpullleftright}}
\put(2,6.3){\usebox{\partreverseorder}}
\end{picture}
\end{center}
Composing the two upper partitions in this picture gives a pictorial representation of the permutation $s_{\chi}$. Since it is placed above $\pi$ (it is "read from bottom to top"), the resulting partition is $s_{\chi}^{-1}(\pi)$ :
\begin{center}
\begin{picture}(13,4)
\put(0,1){$s_{\chi}^{-1}(\pi) = $}
\put(3,0){\usebox{\partpermutedpi}}
\end{picture}
\end{center}
\end{ex}

I. Charlesworth, B. Nelson and P. Skoufranis introduced another pictorial representation for bi-noncrossing partitions, drawing the points on vertical lines instead of horizontal ones. Our point in keeping the horizontal drawing is to make the connection with the combinatorics of quantum groups easier, as will appear later on. Mimicking free probability, one would like to define "bi-noncrossing cumulants" $\kappa^{\chi}_{1_{\chi}}$ by induction using a defining formula of the type
\begin{equation}\label{eq:momentcumulant}
\E(T_{1} \dots T_{n}) = \sum_{\sigma\in BNC(\chi)}\kappa_{\sigma}^{\chi}(T_{1}, \dots, T_{n}),
\end{equation}
where $\kappa^{\chi}_{\sigma}$ should be computable using only elements of the form $\kappa^{\chi'}_{1_{\chi'}}$ for restrictions $\chi'$ of $\chi$. To do this, the authors of \cite{charlesworth2014combinatorics} first define "bi-moment functions" $\mathcal{E}_{\pi}$ and then use Möbius inversion. As we will see, these moment functions look like the usual ones except that they take care of the left-right structure. To avoid confusion, they will be denoted by a curly letter $\mathcal{E}$ while we will denote by $\E_{p}$, for a noncrossing partition $p$, the usual nested moment function as defined e.g. in \cite[Def 2.1.1]{speicher1998combinatorial}. For the sake of simplicity, we will from now on write $L_{b} = \varepsilon(b\otimes 1)$ and $R_{b} = \varepsilon(1\otimes b)$.

\begin{de}\label{de:moments}
Let $\chi\in \{\ell, r\}^{n}$, let $\pi\in BNC(\chi)$ and choose an element $T_{j_{i}}^{\chi(i)}\in C_{j_{i}}^{\chi(i)}$ for every $1\leqslant i\leqslant n$. We define an element
\begin{equation*}
\mathcal{E}_{\pi}(T_{j_{1}}^{\chi(1)}, \dots, T_{j_{n}}^{\chi(n)})\in B
\end{equation*}
in three steps :
\begin{enumerate}
\item Permute the tuple $(T_{j_{1}}^{\chi(1)}, \dots, T_{j_{n}}^{\chi(n)})$ to $(T_{j_{s_{\chi}(1)}}^{\chi\circ s_{\chi}(1)}, \dots, T_{j_{s_{\chi}(n)}}^{\chi\circ s_{\chi}(n)})$.
\item Apply the expectation $\E_{s_{\chi}^{-1}(\pi)}$ to this tuple. Using the properties of usual multiplicative functions, this can be written as a nesting of the expectation $\E$, since $s_{\chi}^{-1}(\pi)$ is noncrossing.
\item Blockwise, permute back the elements according to $s_{\chi'}^{-1}$, where $\chi'$ is the restriction of $\chi$ to the block. Then replace each nested expectation $b$ by $L_{b}$ if the leftmost element of the block is a left element and by $R_{b}$ if the first element of the block is a right element.
\end{enumerate}
\end{de}

Note that the functions $\mathcal{E}_{\pi}$ are completely determined by $\E$. Let us illustrate this seemingly complicated definition on the same partition as in Example \ref{ex:binc}.

\begin{ex}
We consider $\chi$ and $\pi$ as in Example \ref{ex:binc}. Let us compute
\begin{equation*}
\mathcal{E}_{\pi}(T_{1}, T_{2}, T_{3}, T_{4}, T_{5}, T_{6}, T_{7}, T_{8}, T_{9}).
\end{equation*}
\begin{enumerate}
\item Permute the elements to $(T_{1}, T_{3}, T_{4}, T_{7}, T_{9}, T_{8}, T_{6}, T_{5}, T_{2})$.
\item Compute
\begin{equation*}
\E_{p}(T_{1}, T_{3}, T_{4}, T_{7}, T_{9}, T_{8}, T_{6}, T_{5}, T_{2}) = \E\left(T_{1}\left(\E(T_{3}\left(\E(T_{4}T_{7})T_{9}\E(T_{8}T_{6})T_{5}\right) \right)T_{2}\right).
\end{equation*}
\item Permute back the elements blockwise to
\begin{equation*}
\E\left(T_{1}\left(\E(T_{3}\left(\E(T_{4}T_{7})T_{5}\E(T_{6}T_{8})T_{9}\right) \right)T_{2}\right)
\end{equation*}
and then replace by the corresponding action of $B$ to get
\begin{equation*}
\mathcal{E}_{\pi}(T_{1}, T_{2}, T_{3}, T_{4}, T_{5}, T_{6}, T_{7}, T_{8}, T_{9}) = \E\left(T_{1}L_{\left(\E(T_{3}\left(L_{\E(T_{4}T_{7})}T_{5}R_{\E(T_{6}T_{8})}T_{9}\right)\right)}T_{2}\right).
\end{equation*}
\end{enumerate}
Because $T_{2}$ is a right element, it commutes with every $L_{b}$, so that we recover \cite[Ex 5.1.2]{charlesworth2014combinatorics}.
\end{ex}

\begin{rem}
The definition of the \emph{operator-valued bi-moment function} in \cite{charlesworth2014combinatorics} is different from Definition \ref{de:moments} and much more involved. However, \cite[Thm 5.1.4]{charlesworth2014combinatorics} asserts that these bi-free moment functions are completely determined by $\E$ and the properties of bi-multiplicative functions. This means that they coincide with Definition \ref{de:moments}. We took here advantage of this deep result to give the "easy" definition of the bi-moment functions.
\end{rem}

Note that for any $\chi\in \{\ell, r\}^{n}$, $s_{\chi}^{-1}$ is an order preserving bijection between $BNC(\chi)$ and $NC(n)$ so that in particular the lattice structure is preserved and the Möbius function on $BNC(\chi)$ is simply given by
\begin{equation*}
\mu_{BNC(\chi)}(\pi, \sigma) = \mu_{NC}(s_{\chi}^{-1}(\pi), s_{\chi}^{-1}(\sigma)).
\end{equation*}
Thus, we can define \emph{bi-noncrossing cumulants} (or $(\ell, r)$-cumulants) by the formula
\begin{equation*}
\kappa^{\chi}_{\pi}(T_{1}, \dots, T_{n}) = \sum_{\sigma\in BNC(\chi)}\mu_{BNC(\chi)}(\sigma, \pi)\mathcal{E}_{\sigma}(T_{1}, \dots, T_{n}),
\end{equation*}
where $\chi$ is the colouring of the tuple $(T_{1}, \dots, T_{n})$. This formula can be inverted to yield the moment-cumulant formula of Equation \eqref{eq:momentcumulant}. That these cumulants are the right combinatorial notion for bi-freeness with amalgamation is the content of \cite[Thm 8.1.1]{charlesworth2014combinatorics}. This theorem states that a family of pairs of $B$-faces is bi-free with amalgamation if and only if the cumulants $\kappa_{1_{\chi}}^{\chi}$ vanishes as soon as its arguments do not all belong to the same pair. Combining this with the general properties of bi-multiplicative functions gives a stronger statement which is the one we need. Let $J = (j_{1}, \dots, j_{n})$ be a tuple of integers. The \emph{kernel of $J$} is the partition $\ker(J)$ of $\{1, \dots, n\}$ where two integers $k$ and $k'$ belong to the same block if and only if $j_{k} = j_{k'}$.

\begin{thm}[Charlesworth -- Nelson -- Skoufranis]\label{thm:vanishingcumulants}
Let $(\A, \E, \varepsilon)$ be a $B$-$B$-noncommutative probability space and let $(C_{j}^{\ell}, C_{j}^{r})_{j}$ be a family of pairs of $B$-faces. Then, it is bi-free with amalgamation over $B$ if and only if for all $n\in \N$, any $\chi\in \{\ell, r\}^{n}$, any $\pi\in BNC(\chi)$, any tuple of integers $j_{1}, \dots, j_{n}$ and any choice of $T_{j_{i}}^{\chi(i)}\in C_{j_{i}}^{\chi(i)}$,
\begin{equation*}
\kappa^{\chi}_{\pi}\left(T_{j_{1}}^{\chi(1)} , \dots, T_{j_{n}}^{\chi(n)}\right) = 0
\end{equation*}
as soon as $\pi\nleqslant \ker(J)$.
\end{thm}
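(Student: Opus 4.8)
The plan is to deduce the statement from \cite[Thm 8.1.1]{charlesworth2014combinatorics}, which characterises bi-freeness through the vanishing of the single-block cumulants $\kappa_{1_{\chi}}^{\chi}$ on mixed arguments, together with the fact that the bi-noncrossing cumulants form a \emph{bi-multiplicative} family of functions. The two implications require very different amounts of work, and only the direct one is substantial.

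The implication from the vanishing condition to bi-freeness is immediate. Assuming the displayed vanishing, I would specialise it to $\pi = 1_\chi$. Now $1_\chi \leqslant \ker(J)$ holds precisely when $\ker(J)$ is the full, one-block partition, that is, when all the indices $j_1, \dots, j_n$ coincide. Hence the hypothesis says exactly that $\kappa_{1_\chi}^\chi(T_{j_1}^{\chi(1)}, \dots, T_{j_n}^{\chi(n)})$ vanishes whenever its arguments do not all belong to the same pair, which is the criterion of \cite[Thm 8.1.1]{charlesworth2014combinatorics}. Bi-freeness follows at once.

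For the converse I would use the block decomposition of bi-multiplicative functions. Since $s_\chi^{-1}$ identifies $BNC(\chi)$ with $NC(n)$ and preserves the lattice structure, every $\pi \in BNC(\chi)$ has a well-defined block structure, and for each block $V$ I write $\chi_V$ for the restriction of $\chi$ to $V$. The key structural input is that $\kappa_\pi^\chi$ is assembled from the single-block cumulants $\kappa_{1_{\chi_V}}^{\chi_V}$, one per block $V$ of $\pi$, via the same nested insertion scheme as in Definition \ref{de:moments}: processing the blocks from innermost to outermost, each single-block cumulant returns an element of $B$ which is fed, as an operator $L_b$ or $R_b$, into the arguments of the surrounding blocks. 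Assume now that the family is bi-free and that $\pi \not\leqslant \ker(J)$. Then some block $V$ of $\pi$ contains two positions $k, k'$ with $j_k \neq j_{k'}$, so the arguments of $\kappa_{1_{\chi_V}}^{\chi_V}$ involve genuine operators from two different pairs. The $B$-valued insertions coming from inner blocks lie in $\varepsilon(B\otimes 1)$ or $\varepsilon(1\otimes B^{op})$, hence in every face, so they are compatible with any single pair and do not affect this observation. By \cite[Thm 8.1.1]{charlesworth2014combinatorics} the single-block cumulant attached to $V$ therefore vanishes, and since its value is multiplied into the surrounding expression through the $B$-bilinear nesting, the whole cumulant $\kappa_\pi^\chi$ is zero.

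The main obstacle is the justification of this factorisation, namely that a single vanishing block forces the entire nested cumulant to vanish. This is exactly where the machinery of bi-multiplicative functions from \cite{charlesworth2014combinatorics} is required: one must check that the cumulant functions $\kappa_\pi^\chi$, defined by Möbius inversion from the moment functions $\mathcal{E}_\pi$, genuinely inherit bi-multiplicativity (the moment functions being bi-multiplicative by \cite[Thm 5.1.4]{charlesworth2014combinatorics}, and Möbius inversion over $BNC(\chi) \cong NC(n)$ preserving this property), and that the insertion of a zero $B$-value propagates through the nesting by multilinearity. Once this is granted, the argument above goes through verbatim.
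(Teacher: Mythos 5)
Your proposal is correct and takes essentially the same route as the paper: the paper gives no detailed proof of this theorem, importing it from \cite[Thm 8.1.1]{charlesworth2014combinatorics} with the one-line justification that combining that result ``with the general properties of bi-multiplicative functions gives a stronger statement which is the one we need.'' Your two directions --- specializing to $\pi = 1_{\chi}$ to recover the single-block vanishing criterion, and using bi-multiplicativity of the cumulant functions to factor $\kappa^{\chi}_{\pi}$ over blocks so that one mixed block forces the whole nested cumulant to vanish --- are precisely that combination, spelled out.
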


\subsection{The quantum permutation group}

It has been known since \cite{kostler2009noncommutative} that the symmetries characterizing freeness with amalgamation are given by the \emph{quantum permutation group} $S_{N}^{+}$. This is a \emph{compact quantum group} in the sense of S.L. Woronowicz \cite{woronowicz1995compact} and was introduced by S. Wang in \cite{wang1998quantum}. Since $S_{N}^{+}$ will also play the role of quantum symmetries in this work, we recall hereafter some basic facts about it. Let $C(S_{N}^{+})$ be the universal unital C*-algebra generated by $N^{2}$ self-adjoint projections $u_{ij}$ satisfying, for all $1\leqslant k\leqslant N$,
\begin{equation}\label{eq:defsn}
\sum_{i=1}^{N}u_{ik} = 1 = \sum_{j=1}^{N}u_{kj}.
\end{equation}
Viewing $u = (u_{ij})_{1\leqslant i, j\leqslant N}$ as a matrix, this means that the sum of the coefficients on any row or column is $1$. This implies in particular that any two elements on the same row or the same column are orthogonal. Moreover, Equation \eqref{eq:defsn} implies that the matrix $u$ is \emph{orthogonal} in the sense that its coefficients are self-adjoint and
\begin{equation*}
{}^{t}uu = \Id_{M_{N}(C(S_{N}^{+}))} = u{}^{t}u.
\end{equation*}
The C*-algebra $C(S_{N}^{+})$ can be endowed with a \emph{compact quantum group} structure (see \cite{woronowicz1995compact} for details) thanks to the coproduct
\begin{equation*}
\D(u_{ij}) = \sum_{k=1}^{N}u_{ik}\otimes u_{kj},
\end{equation*}
where $\otimes$ denotes the spatial tensor product of C*-algebras. The study of the representation theory of $S_{N}^{+}$ by T. Banica in \cite{banica1999symmetries} enables to compute some polynomials in the coefficients of $u$. We will later make use of some of these computations, which we gather in the next proposition. If $p$ is any partition, we set $\delta_{p}(J) = 1$ if $p\leqslant \ker(J)$ and $\delta_{p}(J) = 0$ otherwise.

\begin{prop}\label{prop:vanishingsn}
Let $n\in \N$ and let $p$ be a noncrossing partition. Then, for any $J = (j_{1}, \dots, j_{n})$,
\begin{equation*}
\sum_{\underset{p\leqslant \ker(I)}{I = (i_{1}, \dots, i_{n})}}u_{i_{1}j_{1}}\dots u_{i_{n}j_{n}} = \delta_{p}(J).1_{C(S_{N}^{+})}.
\end{equation*}
\end{prop}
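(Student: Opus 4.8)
The plan is to prove the identity by induction on $n$, using directly the two algebraic features of a magic unitary: the column relation $\sum_{i=1}^{N} u_{ic} = 1$, and the orthogonality of the projections on a fixed row, which combined with $u_{ij}^{2} = u_{ij}$ yields $u_{ij}u_{ik} = \delta_{jk}u_{ij}$. Writing $S_{p}(J)$ for the left-hand side, the case $n = 0$ (empty product equal to $1$) or $n = 1$ (the column relation) is immediate, so the real content is a reduction step that deletes one block of $p$ while keeping the remaining partition noncrossing and tracking the value of $\delta_{p}(J)$. Conceptually, the whole identity says that the vector $\xi_{p} = \sum_{p\leqslant\ker(I)} e_{i_{1}}\otimes\dots\otimes e_{i_{n}}$ is fixed by the transpose corepresentation ${}^{t}u^{\otimes n}$; since ${}^{t}u$ is again a magic unitary, this is exactly Banica's description of the fixed vectors of $S_{N}^{+}$ from \cite{banica1999symmetries}. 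I will nonetheless give the hands-on induction, which makes the statement self-contained.

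The engine of the induction is the classical fact that every noncrossing partition possesses an \emph{interval block}, i.e.\ a block of the form $B = \{a, a+1, \dots, b\}$ consisting of consecutive integers. This is what I would exploit, because it guarantees that the factors $u_{i_{a}j_{a}}\cdots u_{i_{b}j_{b}}$ coming from $B$ occupy consecutive positions in the product $u_{i_{1}j_{1}}\cdots u_{i_{n}j_{n}}$, so that they may be merged. Fixing such a block, the constraint $p\leqslant\ker(I)$ forces $i_{a} = \dots = i_{b} =: i$, so these factors become a product $u_{ij_{a}}u_{ij_{a+1}}\cdots u_{ij_{b}}$ of projections on a single row $i$. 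By the row-orthogonality relation this product vanishes unless $j_{a} = \dots = j_{b}$, in which case it collapses to the single projection $u_{ij_{a}}$.

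Two cases then arise. If the colours $j_{a}, \dots, j_{b}$ on the block are not all equal, every summand vanishes and $S_{p}(J) = 0$; this is consistent, since $B$ is split by $\ker(J)$, whence $p\nleqslant\ker(J)$ and $\delta_{p}(J) = 0$. If instead $j_{a} = \dots = j_{b} = c$, the block collapses to the single factor $u_{ic}$, and since $B$ is an \emph{entire} block of $p$ the common index $i$ occurs nowhere else and is summed independently, so $\sum_{i}u_{ic} = 1$ removes it altogether. What remains is precisely $S_{p''}(J'')$, where $p''$ is the noncrossing partition obtained by deleting $B$ and $J''$ is $J$ with the entries indexed by $B$ removed. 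The inductive hypothesis gives $S_{p''}(J'') = \delta_{p''}(J'')\cdot 1_{C(S_{N}^{+})}$, and since $B$ is constant on $J$ one has $\delta_{p}(J) = \delta_{p''}(J'')$, which closes the induction.

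The only genuinely delicate point is the bookkeeping in the collapsing step: one must verify that the merged index $i = i_{a} = \dots = i_{b}$ is independent of all other summation indices (this is where it matters that $B$ is a full block, not merely a sub-interval of a larger one) and that the factors being combined are adjacent (this is where the interval property is used). Once the existence of an interval block in $NC(n)$ is granted, the two relations above make each reduction automatic, so I expect the main effort to lie in stating the reductions cleanly rather than in overcoming any real obstruction. As an alternative one can skip the induction entirely, invoke the fixed-vector description of the intertwiner spaces of $S_{N}^{+}$, and simply read off the appropriate coefficient of ${}^{t}u^{\otimes n}(\xi_{p}) = \xi_{p}\otimes 1$.
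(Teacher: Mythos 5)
Your proof is correct, and it takes a genuinely different route from the paper: the paper in fact offers no proof of this proposition at all, presenting it as a computation gathered from Banica's study of the representation theory of $S_{N}^{+}$ \cite{banica1999symmetries} --- which is precisely the fixed-vector description you sketch as an alternative in your last lines. Your interval-block induction is therefore the more elementary and self-contained option: it uses only the magic-unitary relations (row orthogonality $u_{ij}u_{ik}=\delta_{jk}u_{ij}$ and the column sums $\sum_{i}u_{ic}=1$) together with two standard combinatorial facts (every nonempty noncrossing partition has an interval block, and deleting a block of a noncrossing partition leaves a noncrossing partition), and it makes transparent that the identity holds for \emph{any} magic unitary, with no quantum group theory needed. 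The delicate point you flag does work exactly as you say: the constraint $p\leqslant\ker(I)$ only forces $I$ to be constant on each block of $p$ and imposes no relation between distinct blocks, so the sum over $I$ factors as independent sums over the blocks, and $\sum_{i}u_{ic}=1$ can be evaluated in place inside the noncommutative product by linearity, even though the remaining factors do not commute with it. What the paper's citation route buys is brevity and conceptual placement --- the identity is the coefficient-wise form of the statement that the functional $\xi_{p}^{*}$ intertwines $u^{\otimes n}$ with the trivial corepresentation; what yours buys is a short verifiable argument. One small caveat on your alternative route: ${}^{t}u$ does satisfy the magic-unitary relations, but it is a corepresentation only of the co-opposite Hopf algebra, so instead of invoking ``the transpose corepresentation'' it is cleaner either to use the universal property (the assignment $u_{ij}\mapsto u_{ji}$ extends to a $*$-homomorphism of $C(S_{N}^{+})$) or to pass from the fixed vector $\xi_{p}$ of $u^{\otimes n}$ to the adjoint intertwiner $\xi_{p}^{*}$ via unitarity, which yields the row-summed identity of the proposition directly; your inductive proof sidesteps this issue entirely.
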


Let us consider the classical permutation group $S_{N}$ represented on $\C^{N}$ as permutation matrices. For $1\leqslant i, j\leqslant N$, let $v_{ij} : S_{N} \rightarrow \C$ be the function sending a permutation matrix $\sigma$ to its $(i, j)$-th coefficient. Then, the functions $v_{ij}$ generate the algebra $C(S_{N})$ of all (continuous) functions on $S_{N}$. Moreover, the family $v_{ij}$ obviously satisfies the defining relations of $C(S_{N}^{+})$. Hence, by the definition of a universal C*-algebra, there is a surjective $*$-homomorphism $\Phi : C(S_{N}^{+})\longrightarrow C(S_{N})$ completely determined by $\Phi(u_{ij}) = v_{ij}$. Moreover, one can prove that $\Phi$ respects the quantum group structure of $S_{N}^{+}$ and the group structure of $S_{N}$ and that $S_{N}$ is the biggest classical group admitting such a quotient map. This is one of the reasons why $S_{N}^{+}$ can be seen as a quantum version of the permutation group.

\section{Quantum invariance and bi-freeness for families of pairs}\label{sec:DeFinetti}

In this section, we will study the consequences of quantum bi-invariance in two different settings, first assuming some bi-noncommutative structure on the pairs and then taking a more general approach. This first requires the introduction of a specific family of actions of $S_{N}^{+}$  on families of pairs.

\subsection{Linear action of quantum permutation groups}\label{subsec:linearaction}

It is not clear at first how $S_{N}^{+}$ can be used to define the symmetries of bi-freeness. The path we take here is to consider, instead of a usual quantum group action, a \emph{linear action} of $S_{N}^{+}$. A similar idea was used by W. Liu in \cite{liu2014noncommutative} to characterize boolean independence, even though he was using quantum semigroups instead of quantum groups.

\begin{de}
A linear action of $S_{N}^{+}$ on a vector space $V$ is a linear map $\beta : V\rightarrow V\otimes C(S_{N}^{+})$ such that
\begin{equation*}
(\beta\otimes \ii)\circ\beta = (\ii\otimes \D)\circ\beta.
\end{equation*}
If $\varphi$ is a linear functional on $V$, then it is said to be \emph{invariant} under $\beta$ if $(\varphi\otimes \ii)\circ\beta = \varphi.1_{C(S_{N}^{+})}$.
\end{de}

Let us insist that $\beta$ is not assumed to be multiplicative, hence is not a quantum group action in the usual sense. We will have to deal in the sequel with moments of monomials, i.e. of products of elements indexed by tuples. To make things easier we introduce some shorthand notations. If $(x_{j}^{\ell}, x_{j}^{r})_{j}$ is a family of pairs of random variables then, for any $I = (i_{1}, \dots, i_{n})$ and any $\chi \in \{\ell, r\}^{n}$, we set
\begin{equation*}
x_{I}^{\chi} = x_{i_{1}}^{\chi(1)}\dots x_{i_{n}}^{\chi(n)}.
\end{equation*}
If $(u_{ij})_{ij}$ is a matrix whose coefficients are operators then, for any $I = (i_{1}, \dots, i_{n})$, any $J = (j_{1}, \dots, j_{n})$ and any $\chi \in \{\ell, r\}^{n}$, we set 
\begin{equation*}
u_{IJ} = u_{i_{1}j_{1}}\dots u_{i_{n}j_{n}} \text{ and } u_{IJ}^{\chi} = u_{s_{\chi}(I)s_{\chi}(J)},
\end{equation*}
where $s_{\chi}(I) = (i_{s_{\chi}(1)}, \dots, i_{s_{\chi}(n)})$ and similarly for $s_{\chi}(J)$. Eventually, when summing on tuples, we will set
\begin{equation*}
\sum_{I}^{N} = \sum_{i_{1}, \dots, i_{n} = 1}^{N}.
\end{equation*}

\begin{rem}
With the notations above, the formula for the coproduct of a monomial of coefficients of $u$ is given by
\begin{equation*}
\D(u_{IJ}) = \sum_{K}^{N}u_{IK}\otimes u_{KJ}.
\end{equation*}
\end{rem}

Our quantum symmetries for bi-freeness will be given by the following linear action of $S_{N}^{+}$ :

\begin{de}\label{de:permutationaction}
Let $(\A, \varphi)$ be a noncommutative probability space, let $(x_{j}^{\ell}, x_{j}^{r})_{1\leqslant j\leqslant N}$ be a finite family of pairs of random variables in $\A$ and let $\M\subset \A$ be the algebra that they generate. The \emph{bi-free quantum permutation action}, is the linear action $\beta_{N}$ of $S_{N}^{+}$ on $\M$ given by
\begin{equation}\label{eq:linearaction}
\beta_{N}(x_{J}^{\chi}) = \sum_{I}^{N} x_{I}^{\chi}\otimes u_{IJ}^{\chi} .
\end{equation}
\end{de}

Let $\alpha_{N}$ be the multiplicative action of $S_{N}^{+}$ on $\M$ by "quantum permutation of pairs", i.e. the unique algebra homomorphism $\M \rightarrow \M\otimes C(S_{N}^{+})$ such that
\begin{equation*}
\alpha_{N}(x_{j}^{\ell}) = \sum_{i=1}^{N}x_{i}^{\ell}\otimes u_{ij} \text{ and }
\alpha_{N}(x_{j}^{r}) = \sum_{i=1}^{N}x_{i}^{r}\otimes u_{ij}.
\end{equation*}
We have the following description of $\beta_{N}$ using $\alpha_{N}$ for a monomial $x_{I}^{\chi}$ :  first permute the variables of the monomial using $s_{\chi}$, then apply $\alpha_{N}$ and eventually permute back the variables of the monomials using $s_{\chi}^{-1}$. Comparing this with the process described in Definition \ref{de:moments} suggests that $\beta_{N}$ is a natural candidate for the quantum symmetries of bi-freeness. Note also that this permutation in the definition of $\beta_{N}$ is precisely what prevents it from being multiplicative.

\begin{lem}\label{prop:nondegenerate}
The map $\beta_{N}$ is a linear action of $S_{N}^{+}$ on the vector space $\M$.
\end{lem}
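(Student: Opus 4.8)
The statement requires two things: that $\beta_N$ is a well-defined linear map $\M\to\M\otimes C(S_N^+)$, and that it satisfies the coassociativity identity $(\beta_N\otimes\ii)\circ\beta_N = (\ii\otimes\D)\circ\beta_N$. The plan is to dispatch the second (formal) point first, since it is a short direct computation, and then to reduce the first to the already-available well-definedness of the multiplicative action $\alpha_N$.

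The combinatorial fact underlying coassociativity is the identity
\begin{equation*}
\D(u_{IJ}^\chi) = \sum_K^N u_{IK}^\chi\otimes u_{KJ}^\chi,
\end{equation*}
valid for every $\chi$ and all tuples $I,J$. To obtain it I would write $u_{IJ}^\chi = u_{s_\chi(I)s_\chi(J)}$ and combine the multiplicativity of $\D$ with the coproduct formula for monomials $\D(u_{AB}) = \sum_L^N u_{AL}\otimes u_{LB}$ recorded in the Remark; since $s_\chi$ is a bijection of $\{1,\dots,n\}$, reindexing the summation tuple $L$ as $s_\chi(K)$ converts $u_{AL}\otimes u_{LB}$ into $u_{IK}^\chi\otimes u_{KJ}^\chi$. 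Granting this identity, both sides of the coassociativity relation are computed on a monomial $x_J^\chi$ in two lines: expanding $\beta_N$ twice gives $(\beta_N\otimes\ii)\beta_N(x_J^\chi) = \sum_{I,K}^N x_I^\chi\otimes u_{IK}^\chi\otimes u_{KJ}^\chi$, whereas applying $\ii\otimes\D$ to $\beta_N(x_J^\chi)$ and inserting the identity above yields exactly the same triple sum. Linearity then extends the equality from monomials to all of $\M$.

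The delicate point, which I expect to be the main obstacle, is the well-definedness of $\beta_N$: formula \eqref{eq:linearaction} prescribes $\beta_N$ on the monomials $x_J^\chi$, which span $\M$ but need not be linearly independent, so one must check compatibility with the relations holding in $\M$. Here I would exploit the description of $\beta_N$ recorded just before the lemma: on a monomial, $\beta_N$ is obtained by permuting the variables according to $s_\chi$, applying the genuine algebra homomorphism $\alpha_N$, and permuting back according to $s_\chi^{-1}$. Since $\alpha_N$ is well-defined by hypothesis and preserves the left--right colouring (it sends $x_j^\ell$ to a combination of the $x_i^\ell$ and $x_j^r$ to a combination of the $x_i^r$), and since the two reordering steps only move the colouring $\chi$ to the sorted colouring and back, the construction is compatible with the grading of words by colouring; I would then verify that the composite descends to $\M$ by testing it against the defining relations, using that $\alpha_N$ respects this grading. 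In the frequent situation where the mixed monomials $x_J^\chi$ are linearly independent---for instance when $\M$ is the algebra of noncommutative polynomials in the pairs, which is all that is needed to test invariance of a functional $\varphi$---well-definedness is immediate and linearity is automatic, so no extra argument is required.
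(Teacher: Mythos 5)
Your first paragraph reproduces the paper's entire proof: the paper expands $(\beta_{N}\otimes \ii)\circ\beta_{N}(x_{J}^{\chi})$ into the double sum $\sum_{I,K}x_{K}^{\chi}\otimes u_{KI}^{\chi}\otimes u_{IJ}^{\chi}$ and reindexes the middle sum via $I'=s_{\chi}(I)$, which is exactly your coproduct identity $\D(u_{IJ}^{\chi})=\sum_{K}u_{IK}^{\chi}\otimes u_{KJ}^{\chi}$; this is correct and is all the paper does. The paper never addresses well-definedness, so your second paragraph goes beyond it --- and your instinct that this is the delicate point is sound --- but the argument you sketch there does not work. The recipe ``permute by $s_{\chi}$, apply $\alpha_{N}$, permute back'' is an operation on \emph{formal words}, not on elements of $\M$: the permutation steps depend on the written colouring of the word and are not linear maps on $\M$, so the well-definedness of $\alpha_{N}$ transfers nothing. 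Worse, the ``grading by colouring'' you invoke does not exist in $\M$, because relations in $\A$ can identify monomials carrying different colourings.

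Concretely, nothing in Definition \ref{de:permutationaction} forbids $x_{j}^{\ell}=x_{j}^{r}=:x_{j}$ for all $j$ (take the $x_{j}$ to be free generators of a polynomial algebra, so that products $x_{i_{1}}x_{i_{2}}$ are linearly independent). Then $x_{1}^{\ell}x_{2}^{\ell}=x_{1}^{r}x_{2}^{r}$ as elements of $\M$, but formula \eqref{eq:linearaction} assigns to the first word $\sum_{i_{1},i_{2}}x_{i_{1}}x_{i_{2}}\otimes u_{i_{1}1}u_{i_{2}2}$ (as $s_{(\ell,\ell)}=\mathrm{id}$) and to the second $\sum_{i_{1},i_{2}}x_{i_{1}}x_{i_{2}}\otimes u_{i_{2}2}u_{i_{1}1}$ (as $s_{(r,r)}$ is the transposition). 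For $N\geqslant 4$ the coefficients $u_{i_{1}1}$ and $u_{i_{2}2}$ do not commute in $C(S_{N}^{+})$, so these two expressions differ: $\beta_{N}$ is genuinely \emph{not} well defined on $\M$ in such degenerate situations, and no grading argument can rescue it. The only correct reading --- and it is the one in your final sentence --- is to define $\beta_{N}$ on formal monomials (equivalently, on the free polynomial algebra in the pairs), or to assume the monomials $x_{J}^{\chi}$ are linearly independent; this is also what the paper implicitly does, since its proof, like yours, is a computation on monomials, and every subsequent use of $\beta_{N}$ (the invariance equations defining quantum bi-exchangeability) involves only the values $\varphi(x_{I}^{\chi})$ on monomials.
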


\begin{proof}
Let $J$ be a $n$-tuple of indices and let $\chi\in \{\ell, r\}^{n}$. Then,
\begin{eqnarray*}
(\beta_{N}\otimes \ii)\circ\beta_{N}(x_{J}^{\chi}) & = & \sum_{I}^{N}\sum_{K}^{N}x_{K}^{\chi}\otimes u_{KI}^{\chi}\otimes u_{IJ}^{\chi} \\
& = & \sum_{K}^{N}x_{K}^{\chi}\otimes\left(\sum_{I}^{N} u_{s_{\chi}(K)s_{\chi}(I)}\otimes u_{s_{\chi}(I)s_{\chi}(J)}\right) \\
& = & \sum_{K}^{N}x_{K}^{\chi}\otimes\left(\sum_{I'=1}^{N} u_{s_{\chi}(K)I'}\otimes u_{I's_{\chi}(J)}\right) \\
& = & (\ii\otimes \D)\circ\beta_{N}(x_{J}^{\chi})
\end{eqnarray*}
proving that $\beta_{N}$ is a linear action.
\end{proof}

We will be interested in invariance under the linear action $\beta_{N}$, so let us give a name to this phenomenon.

\begin{de}
Let $(\A, \varphi)$ be a noncommutative probability space. A finite sequence of pairs $(x_{j}^{\ell}, x_{j}^{r})_{1\leqslant j\leqslant N}$ in $\A$ is said to be \emph{quantum bi-exchangeable} if $\varphi$ is invariant under $\beta_{N}$, i.e.
\begin{equation*}
\varphi(x_{J}^{\chi}).1_{C(S_{N}^{+})} = \sum_{I}^{N}\varphi(x_{I}^{\chi})u_{IJ}^{\chi}.
\end{equation*}
\end{de}

\begin{rem}
Usually, such actions are extended to the von Neumann algebra generated by $\M$ in the GNS representation of $\varphi$. However, we cannot apply \cite[Thm 3.3]{curran2009quantum} to do this here, because the action $\beta_{N}$ is not multiplicative. This is the reason why we will have to deal only with the algebra generated by the variables all along.
\end{rem}

Our first positive result is that quantum bi-invariance is compatible with bi-freeness with amalgamation. This is the "easy" part of the de Finetti theorem and justifies the introduction of our notion of invariance.

\begin{prop}\label{prop:bifinettieasy}
Let $(\A, \E, \varepsilon)$ be a $B$-$B$-noncommutative probability space and let $(x_{j}^{\ell}, x_{j}^{r})_{1\leqslant j\leqslant N}$ be a family of $B$-pairs which are bi-free and identically distributed with amalgamation over $B$. Let $\varphi$ be a state on $\A$ such that, for all $x\in \A$,
\begin{equation}\label{eq:compatibility}
\varphi\circ\varepsilon(\E(x)\otimes 1) = \varphi(x) = \varphi\circ\varepsilon(1\otimes \E(x)).
\end{equation}
Then, $(x_{j}^{\ell}, x_{j}^{r})_{1\leqslant j\leqslant N}$ is a quantum bi-exchangeable sequence of pairs in $(\A, \varphi)$.
\end{prop}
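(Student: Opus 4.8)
The plan is to reduce the invariance identity to the scalar level through the compatibility condition \eqref{eq:compatibility} and then expand every moment into bi-noncrossing cumulants, exploiting the vanishing property of Theorem \ref{thm:vanishingcumulants} together with the identical distribution hypothesis. First I would use \eqref{eq:compatibility} to write, for any tuple $I$ and any colouring $\chi$, $\varphi(x_I^\chi) = \varphi(L_{\E(x_I^\chi)})$, so that the state is entirely governed by the $B$-valued expectation $\E$. Applying the moment-cumulant formula \eqref{eq:momentcumulant} gives $\E(x_I^\chi) = \sum_{\sigma \in BNC(\chi)} \kappa_\sigma^\chi(x_{i_1}^{\chi(1)}, \dots, x_{i_n}^{\chi(n)})$, and Theorem \ref{thm:vanishingcumulants} kills every term with $\sigma \nleqslant \ker(I)$.

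The next step is to observe that, because the pairs are \emph{identically distributed}, the surviving cumulants do not depend on the actual indices. Indeed, if $\sigma \leqslant \ker(I)$ then each block of $\sigma$ involves variables sharing a common index, and bi-multiplicativity factors $\kappa_\sigma^\chi$ into block cumulants, each computed from the distribution of a single pair; identical distribution then makes each factor index-free. Writing $\psi_\sigma^\chi := \varphi(L_{c_\sigma^\chi})$, where $c_\sigma^\chi \in B$ is this common cumulant value, linearity of $L$ and of $\varphi$ yields the clean expansion $\varphi(x_I^\chi) = \sum_{\sigma \in BNC(\chi),\, \sigma \leqslant \ker(I)} \psi_\sigma^\chi$, with scalars $\psi_\sigma^\chi$ depending only on $\sigma$ and $\chi$ and not on $I$.

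The combinatorial heart of the argument is the evaluation of $\sum_I^N \varphi(x_I^\chi) u_{IJ}^\chi$. Substituting the expansion above and exchanging the two summations reduces everything to computing $\sum_{I : \sigma \leqslant \ker(I)} u_{IJ}^\chi$ for fixed $\sigma \in BNC(\chi)$. Here I would change variables by $I' = s_\chi(I)$, using $\ker(s_\chi(I)) = s_\chi^{-1}(\ker(I))$ and $u_{IJ}^\chi = u_{s_\chi(I)s_\chi(J)}$, which transforms the constraint $\sigma \leqslant \ker(I)$ into $p \leqslant \ker(I')$ with $p = s_\chi^{-1}(\sigma) \in NC(n)$. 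Since $p$ is now genuinely noncrossing, Proposition \ref{prop:vanishingsn} applies and gives $\sum_{I' : p \leqslant \ker(I')} u_{I' s_\chi(J)} = \delta_p(s_\chi(J)) \cdot 1$; and $\delta_p(s_\chi(J)) = 1$ exactly when $\sigma \leqslant \ker(J)$. Feeding this back produces $\sum_I^N \varphi(x_I^\chi) u_{IJ}^\chi = \big(\sum_{\sigma \leqslant \ker(J)} \psi_\sigma^\chi\big) 1 = \varphi(x_J^\chi)\, 1$, which is exactly quantum bi-exchangeability.

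I expect the main obstacle to be bookkeeping the twist $s_\chi$ consistently: one must check that the $s_\chi$-twisting built into $u_{IJ}^\chi$ matches precisely the $s_\chi$-twisting defining $BNC(\chi)$, so that the noncrossing partition $p = s_\chi^{-1}(\sigma)$ fed into Proposition \ref{prop:vanishingsn} is the very same twist that converts the index constraint on $I$ into one on $I'$. This is of course exactly the point for which the action $\beta_N$ was twisted in the first place; the only other delicate step is justifying rigorously that identical distribution forces the block cumulants, and hence the scalars $\psi_\sigma^\chi$, to be index-independent.
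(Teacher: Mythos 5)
Your proposal is correct and takes essentially the same approach as the paper: expand moments into bi-noncrossing cumulants, use Theorem \ref{thm:vanishingcumulants} to discard terms with $\sigma\nleqslant\ker(I)$, invoke identical distribution (plus bi-multiplicativity) for the common cumulant values, and evaluate $\sum_{I\,:\,\sigma\leqslant\ker(I)}u_{IJ}^{\chi}$ via the change of variables $I'=s_{\chi}(I)$ and Proposition \ref{prop:vanishingsn}, matching $\delta_{s_{\chi}^{-1}(\sigma)}(s_{\chi}(J))=\delta_{\sigma}(J)$. The only cosmetic difference is that you scalarize at the outset through $\varphi(x)=\varphi(L_{\E(x)})$, whereas the paper first proves the $B$-valued invariance of $\E$ and only applies the compatibility condition \eqref{eq:compatibility} at the very end.
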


\begin{proof}
We first prove an invariance property for $\E$. Let $J$ be a $n$-tuple and let $\chi\in\{\ell, r\}^{n}$. Using Equation \eqref{eq:momentcumulant} we have
\begin{eqnarray*}
(\E\otimes \ii)\circ\beta_{N}(x_{J}^{\chi}) & = & \sum_{I}^{N}\E(x_{I}^{\chi})\otimes u_{IJ}^{\chi} \\
& = & \sum_{I}^{N}\sum_{\pi\in BNC(\chi)}\kappa^{\chi}_{\pi}(x_{I}^{\chi})\otimes u_{IJ}^{\chi}.
\end{eqnarray*}
By Theorem \ref{thm:vanishingcumulants}, $\kappa^{\chi}_{\pi}(x_{I}^{\chi}) = 0$ unless $\pi\leqslant \ker(I)$. Thus, using the fact that the elements are identically distributed, the sum can be rewritten as
\begin{equation*}
\sum_{\pi\in BNC(\chi)}\kappa^{\chi}_{\pi}\otimes \left(\sum_{\underset{\pi\leqslant \ker(I)}{I}}^{N}u_{IJ}^{\chi}\right),
\end{equation*}
where $\kappa^{\chi}_{\pi}$ is the common value of $\kappa^{\chi}_{\pi}(x_{I}^{\chi})$ for all $I$'s satisfying $\pi\leqslant \ker(I)$. By Proposition \ref{prop:vanishingsn},
\begin{equation*}
\sum_{\underset{\pi\leqslant \ker(I)}{I}}^{N}u_{IJ}^{\chi} = \sum_{\underset{s_{\chi}^{-1}(\pi)\leqslant \ker(I')}{I'}}^{N}u_{I's_{\chi}(J)} = \delta_{s_{\chi}^{-1}(\pi)}(s_{\chi}(J)).1_{C(S_{N}^{+})}
\end{equation*}
because $s_{\chi}^{-1}(\pi)$ is noncrossing. Thus,
\begin{equation*}
(\E\otimes \ii)\circ\beta_{N}(x_{J}^{\chi}) = \sum_{\pi\in BNC(\chi)}\kappa^{\chi}_{\pi}\otimes \delta_{s_{\chi}^{-1}(\pi)}(s_{\chi}(J)).1_{C(S_{N}^{+})}.
\end{equation*}
Let us now compute $\E(x_{J}^{\chi})\otimes 1_{C(S_{N}^{+})}$ and compare it with this. Because the pairs are bi-free and identically distributed with amalgamation over $B$, Theorem \ref{thm:vanishingcumulants} implies that
\begin{equation*}
\kappa^{\chi}_{\pi}(x_{J}^{\chi}) = \delta_{\pi}(J)\kappa^{\chi}_{\pi},
\end{equation*}
yielding
\begin{equation*}
\E(x_{J}^{\chi})\otimes 1_{C(S_{N}^{+})} = \sum_{\pi\in BNC(\chi)}\kappa_{\pi}^{\chi}(x_{J}^{\chi})\otimes 1_{C(S_{N}^{+})} = \sum_{\pi\in BNC(\chi)}\kappa^{\chi}_{\pi}\otimes \delta_{\pi}(J).1_{C(S_{N}^{+})}.
\end{equation*}
Noticing that $\delta_{\pi}(J) = \delta_{s_{\chi}^{-1}(\pi)}(s_{\chi}(J))$ shows the invariance of $\E$. Now, the compatibility condition of Equation \eqref{eq:compatibility} yields
\begin{eqnarray*}
(\varphi\otimes \ii)\circ\beta_{N}(x_{I}^{\chi}) & = & (\varphi\circ\varepsilon\circ(1\otimes \E)\otimes \ii)\circ\beta_{N}(x_{I}^{\chi}) \\
& = & ((\varphi\circ\varepsilon)\otimes \ii)(1\otimes (\E\otimes \ii)\circ\beta_{N}(x_{I}^{\chi})) \\
& = & \varphi\circ\varepsilon(1\otimes \E(x_{I}^{\chi}))\otimes 1_{C(S_{N}^{+})} \\
& = & \varphi(x_{I}^{\chi}).1_{C(S_{N}^{+})}.
\end{eqnarray*}
\end{proof}

Proposition \ref{prop:bifinettieasy} suggests that quantum bi-invariance as we defined it can play the role of quantum symmetries for bi-free probability. The next task is therefore to prove a converse statement showing that quantum bi-invariant families of pairs are bi-free with amalgamation. As we will see, several issues arise when studying this problem and we can only partially solve it.

\subsection{An alternate characterization of bi-freeness}\label{subsec:weakdefinetti}

In this subsection, we will give a weak converse to Proposition \ref{prop:bifinettieasy}. Before stating it, let us introduce some notations. From now on, we fix a $B$-$B$-noncommutative probability space $(\A, \E, \varepsilon)$ and a family of $B$-pairs $(x_{j}^{\ell}, x_{j}^{r})_{j}$. We will denote by $C_{j}^{\ell}$ (resp. $C_{j}^{r}$) the subalgebra of $\A$ generated by $(x_{j}^{\ell})_{j}$ and $\varepsilon(B\otimes 1)$ (resp. by $(x_{j}^{\ell})_{j}$ and $\varepsilon(1\otimes B^{op})$). Our statement will be a weak converse because we will make strong assumptions on the variables. To simplify the statement, let us give names to these assumptions.

\begin{de}
The family of $B$-pairs $(x_{j}^{\ell}, x_{j}^{r})_{1\leqslant j\leqslant N}$ is said to be \emph{strongly quantum bi-invariant} if for any $j_{1}, \dots, j_{n}$, any $\chi\in \{\ell, r\}^{n}$ and any $b_{1}, \dots, b_{n+1}\in B$,
\begin{equation*}
\E\left(T_{b_{1}}x_{j_{1}}^{\chi(1)}T_{b_{2}}\dots x_{j_{n}}^{\chi(n)}T_{b_{n+1}}\right)\otimes 1_{C(S_{N}^{+})} = \sum_{I=1}^{N} \E\left(T_{b_{1}}x_{i_{1}}^{\chi(1)}T_{b_{2}}\dots x_{i_{n}}^{\chi(n)}T_{b_{n+1}}\right)\otimes u_{IJ}^{\chi}
\end{equation*}
where $T$ can be either $L$ or $R$. Moreover, it is said to satisfy the \emph{splitting property} if for any $n$ different indices $j_{1}, \dots, j_{n}$ and elements $X_{j_{i}}^{\chi(i)}\in C_{j_{i}}^{\chi(i)}$,
\begin{equation*}
\E\left(X_{j_{1}}^{\chi(1)}\dots X_{j_{n}}^{\chi(n)}\right) = \prod_{i=1}^{n}\E\left(X_{j_{s_{\chi}(i)}}^{\chi\circ s_{\chi}(i)}\right).
\end{equation*}
\end{de}

We can now characterize bi-freeness in terms of quantum invariance.

\begin{thm}\label{thm:weakdefinetti}
Let $(\A, \E, \varepsilon)$ be a $B$-$B$-noncommutative probability space. A family $(x_{j}^{\ell}, x_{j}^{r})_{1\leqslant j\leqslant N}$ of $B$-pairs is bi-free and identically distributed with amalgamation over $B$ if and only if it is strongly quantum bi-invariant and satisfies the splitting property.
\end{thm}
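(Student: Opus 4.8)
The plan is to prove both implications by tracking cumulants through the moment–cumulant machinery. The forward direction is essentially contained in the proof of Proposition~\ref{prop:bifinettieasy}: if the family is bi-free and identically distributed with amalgamation, then by Theorem~\ref{thm:vanishingcumulants} the cumulants $\kappa^{\chi}_{\pi}(x_{I}^{\chi})$ vanish unless $\pi\leqslant\ker(I)$ and take a common value depending only on $\pi$ (not on $I$) when $\pi\leqslant\ker(I)$. Feeding this into the computation of $(\E\otimes\ii)\circ\beta_{N}$ and applying Proposition~\ref{prop:vanishingsn} gives strong quantum bi-invariance; the only point to check beyond Proposition~\ref{prop:bifinettieasy} is that inserting the $T_{b_i}$ (the copies $L_{b}$ or $R_{b}$) does not disturb the argument, which follows because these are fixed elements of $\varepsilon(B\otimes 1)$ or $\varepsilon(1\otimes B^{op})$ and the cumulant vanishing of Theorem~\ref{thm:vanishingcumulants} applies verbatim to the augmented tuple. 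The splitting property is immediate: when the indices $j_{1},\dots,j_{n}$ are distinct, bi-freeness forces all mixed cumulants to vanish, so $\mathcal{E}_{\pi}$ only survives for $\pi$ refining $\ker(J)$, and the moment factorizes as the stated product after the $s_{\chi}$-reordering described in Definition~\ref{de:moments}.

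For the converse, I would argue by induction on $n$ that the two hypotheses force the cumulants $\kappa^{\chi}_{\pi}(x_{J}^{\chi})$ to obey the vanishing-and-identical-distribution pattern characterizing bi-freeness. First I would use strong quantum bi-invariance to show that $\E(x_{J}^{\chi})$ depends on $J$ only through $\ker(J)$, i.e. the family is \emph{identically distributed}: summing the invariance identity against suitable coefficients of $u$ and invoking Proposition~\ref{prop:vanishingsn} isolates the moment functions indexed by a fixed kernel. Next, translating this invariance at the level of cumulants via Möbius inversion (using that $s_{\chi}^{-1}$ is a lattice isomorphism $BNC(\chi)\to NC(n)$), I would deduce that $\kappa^{\chi}_{\pi}(x_{J}^{\chi})$ likewise depends only on $\ker(J)$ and that $\kappa^{\chi}_{\pi}(x_{J}^{\chi})=0$ whenever $\pi\nleqslant\ker(J)$. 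This last vanishing is exactly the criterion of Theorem~\ref{thm:vanishingcumulants}, so it yields bi-freeness with amalgamation.

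The role of the splitting property is to supply the ``base case'' data that pure invariance cannot see: invariance under $\beta_N$ constrains how moments for different index tuples relate to one another, but it does not by itself tell us how the mixed moments on distinct faces factorize into single-face quantities. The splitting property provides precisely this factorization on tuples with distinct indices, and I expect it to interact with the induction so that, once the cumulants of order $<n$ have the bi-free form, the order-$n$ mixed cumulants on distinct-index tuples are pinned down to zero. Concretely, I would expand $\E(x_{J}^{\chi})$ for a distinct-index tuple both via the moment–cumulant formula \eqref{eq:momentcumulant} and via the splitting factorization, and compare: the factorized side is a sum over $\pi\leqslant\ker(J)$ only, forcing the remaining cumulants to vanish.

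The main obstacle will be the combinatorial bookkeeping in the converse, specifically controlling the twist by $s_{\chi}$ when passing between the moment functions $\mathcal{E}_{\pi}$ and the nested expectations $\E_{s_{\chi}^{-1}(\pi)}$. The subtlety is that the $B$-actions $L_b$ and $R_b$ inserted in Definition~\ref{de:moments} are placed according to whether the leftmost element of each block is a left or right variable, so the comparison of the two expansions of $\E(x_{J}^{\chi})$ must be carried out blockwise and compatibly with the second relation $\E(T\varepsilon(b\otimes1))=\E(T\varepsilon(1\otimes b))$ of Definition~\ref{de:bbspace}. Making the induction hypothesis strong enough to close this—namely that \emph{all} lower-order bi-moment functions already match the bi-free model, including the placement of the $B$-operators—is where the real work lies, and it is exactly why the hypotheses are stated with the extra $T_{b_i}$ insertions rather than for bare monomials.
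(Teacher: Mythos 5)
Your forward direction matches the paper's (Proposition \ref{prop:bifinettieasy} plus the observation that for pairwise distinct indices only $\pi=0_{\chi}$ survives in the moment--cumulant formula), and your treatment of all-distinct-index tuples in the converse is sound: the splitting property plus induction on the order does force $\kappa^{\chi}_{1_{\chi}}(X_{J}^{\chi})=0$ when the $j_{i}$ are pairwise distinct. The genuine gap is in your second step. You claim that strong quantum bi-invariance, ``translated at the level of cumulants via Möbius inversion,'' yields $\kappa^{\chi}_{\pi}(X_{J}^{\chi})=0$ whenever $\pi\nleqslant\ker(J)$. Möbius inversion can only transfer the property ``depends on $J$ only through $\ker(J)$'' from the moment functions $\mathcal{E}_{\sigma}$ to the cumulants; it cannot produce any vanishing. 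But Theorem \ref{thm:vanishingcumulants} demands vanishing also for tuples with \emph{repeated} indices, e.g. $\kappa^{\chi}_{1_{\chi}}(x_{1}^{\ell},x_{1}^{\ell},x_{2}^{\ell})=0$, where $\ker(J)=\{\{1,2\},\{3\}\}$ and $1_{\chi}\nleqslant\ker(J)$. Neither your distinct-index comparison nor kernel-dependence says anything about such tuples: kernel-dependence is essentially what classical bi-exchangeability already gives, and classical exchangeability notoriously does not imply freeness. So your induction closes only on the set of tuples with pairwise distinct indices, which is strictly weaker than bi-freeness.

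The missing ingredient is precisely the point where the quantum (rather than classical) nature of the invariance must be exploited, and it is the paper's Lemma \ref{lem:uniqueness}, a Köstler--Speicher-type uniqueness statement. The paper sets $\Phi=\E-G$, where $G$ is the twisted free-product (``bi-free model'') expectation whose equality with $\E$ characterizes bi-freeness (Lemma \ref{lem:characterization}); both functionals are quantum bi-invariant, and the splitting property makes $\Phi$ vanish on tuples with $\ker(J)=0_{\chi}$. A \emph{decreasing} induction on the number of blocks of $\ker(J)$, which substitutes into the invariance equation the concrete matrix of non-commuting projections from \cite[Eq 9]{kostler2009noncommutative}, then forces $\Phi(X_{J}^{\chi})=0$ for every $J$ --- this is the step that converts information about tuples with many distinct indices into information about tuples with repetitions, and it has no counterpart in your proposal. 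Your argument would be repaired by either importing that lemma (applied to $\E-G$, or reformulated as: a quantum bi-invariant $B$-valued functional vanishing on all tuples with pairwise distinct indices vanishes identically), or by reproducing its projection-matrix argument inside your cumulant induction; without one of these, the claimed vanishing for $\pi\nleqslant\ker(J)$ with repeated indices is unsupported.
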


Note that we are not assuming that the family is infinite here. This proposition is proved by comparing $\E$ with a twisted free product expectation which characterizes bi-freeness with amalgamation. For a tuple $(X_{j_{1}}^{\chi(1)}, \dots, X_{j_{n}}^{\chi(n)})$ we will, with a slight abuse of notations, write $X_{J}^{\chi}$ for both the product of the elements or the tuple itself when it is the argument of a moment or cumulant function. The first step of the proof is to define the twisted expectations $G$.

\begin{de}
Given a tuple $(X_{j_{1}}^{\chi(1)}, \dots, X_{j_{n}}^{\chi(n)})$, we define $G(X_{J}^{\chi})$ as follows :
\begin{enumerate}
\item Replace each occurrence of $L_{b}T$, $TL_{b}$, $R_{b}T$ and $TR_{b}$ by $bT$, $Tb$, $Tb$ and $bT$ respectively (see \cite[Rem 4.2.4]{charlesworth2014combinatorics}).
\item Permute the tuple using $s_{\chi}$.
\item Apply $\E$ to the product of the elements and compute it as if the pairs were free.
\item Eventually, reverse the first two operations.
\end{enumerate}
\end{de} 

For our purpose, the interest of the map $G$ lies in its link to bi-freeness. This is a direct consequence of the results of \cite{charlesworth2014combinatorics}, but since it is not explicitly stated in the way we need there, we give it as a lemma.

\begin{lem}\label{lem:characterization}
The family of $B$-pairs $(x_{j}^{\ell}, x_{j}^{r})_{j}$ is bi-free with amalgamation over $B$ if and only if for all $n\in \N$, all $\chi\in \{\ell, r\}^{n}$ and all $X_{j_{1}}^{\chi(1)}, \dots, X_{j_{n}}^{\chi(n)}$ such that $X_{j_{i}}^{\chi(i)}\in C_{j_{i}}^{\chi(i)}$, we have
\begin{equation*}
\E(X_{J}^{\chi}) = G(X_{J}^{\chi}).
\end{equation*}
\end{lem}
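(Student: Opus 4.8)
The plan is to prove both implications simultaneously by identifying $G(X_J^\chi)$ with exactly the part of the moment--cumulant expansion of $\E(X_J^\chi)$ that survives when all \emph{mixed} bi-free cumulants are discarded, and then invoking Theorem \ref{thm:vanishingcumulants}. Concretely, I would show that for every $n$, every $\chi\in\{\ell,r\}^{n}$ and every choice of $X_{j_i}^{\chi(i)}\in C_{j_i}^{\chi(i)}$ one has
\[
G(X_J^\chi)=\sum_{\substack{\pi\in BNC(\chi)\\ \pi\leqslant\ker(J)}}\kappa^{\chi}_{\pi}\!\left(X_J^\chi\right),
\]
while the full moment--cumulant formula \eqref{eq:momentcumulant} gives $\E(X_J^\chi)=\sum_{\pi\in BNC(\chi)}\kappa^{\chi}_{\pi}(X_J^\chi)$ with the sum ranging over \emph{all} bi-noncrossing partitions. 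Granting this, the equality $\E(X_J^\chi)=G(X_J^\chi)$ for all inputs is equivalent to the vanishing of $\kappa^{\chi}_{\pi}(X_J^\chi)$ for every $\pi\nleqslant\ker(J)$, which is precisely the condition characterizing bi-freeness with amalgamation in Theorem \ref{thm:vanishingcumulants}.

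The heart of the argument is therefore the displayed identity for $G$, and this is where I would draw on the combinatorics of \cite{charlesworth2014combinatorics}. Step (2) of the definition of $G$ reorders the tuple by $s_\chi$, and by Definition \ref{de:binoncrossing} the map $s_\chi^{-1}$ is an order isomorphism $BNC(\chi)\to NC(n)$; hence computing the reordered product ``as if the pairs were free'' (step (3)) expands, via the ordinary amalgamated free moment--cumulant formula, into a sum of free cumulant products indexed by noncrossing partitions. For a genuinely free family the mixed cumulants vanish, so only partitions $p$ with $p\leqslant\ker(s_\chi(J))$ contribute, and for such $p$ the free cumulant of the $j$-indexed block is determined by the marginal $B$-valued distribution of that single pair alone and thus agrees with the corresponding bi-free cumulant of the actual family. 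Transporting this back through $s_\chi$ and matching it with the bi-free cumulant bookkeeping yields exactly the claimed sum over $\pi\leqslant\ker(J)$. The manipulations of the copies of $B$, i.e.\ the passage between $L_b,R_b$ and plain $B$-actions in step (1) and their reinsertion in step (4), are precisely the slides justified in \cite[Rem 4.2.4]{charlesworth2014combinatorics}, and they guarantee that the $B$-module structure is carried through correctly blockwise.

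With this identity established, both directions are immediate. If the family is bi-free and amalgamated over $B$, then by Theorem \ref{thm:vanishingcumulants} the terms with $\pi\nleqslant\ker(J)$ vanish, so $\E(X_J^\chi)$ collapses to the surviving sum, which is $G(X_J^\chi)$. Conversely, if $\E(X_J^\chi)=G(X_J^\chi)$ for all tuples, then $\sum_{\pi\nleqslant\ker(J)}\kappa^{\chi}_{\pi}(X_J^\chi)=0$ for all inputs; a standard downward induction on the length $n$ and on the partition order (isolating the top cumulant $\kappa^{\chi}_{1_\chi}$ for tuples with distinct indices and then propagating) forces each individual mixed cumulant to vanish, and Theorem \ref{thm:vanishingcumulants} then gives bi-freeness with amalgamation. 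As an alternative to the combinatorial route, one could argue distributionally using Definition \ref{de:bifreeness}: realizing the marginals on bimodules $(X_j,\mathring X_j,p_j)$ via \cite[Thm 3.2.4]{charlesworth2014combinatorics} and forming the free product, one checks that the vector expectation $\E_B$ of a mixed product of the $\lambda_j$ and $\rho_j$ operators reduces, after pushing left operators to the front and right operators to the back (the permutation $s_\chi$), to a free computation equal to $G$, so that $\E=G$ exactly expresses that the joint distribution coincides with this bi-free model.

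The main obstacle is the identity $G(X_J^\chi)=\sum_{\pi\leqslant\ker(J)}\kappa^{\chi}_{\pi}(X_J^\chi)$: one must verify carefully that the recipe defining $G$ reproduces the non-mixed part of the bi-free moment--cumulant expansion, with the correct blockwise handling of the $B$-valued operators and with the crucial point that after the $s_\chi$-reordering an \emph{ordinary} free computation, rather than a full bi-free one, is what is being performed. This verification is essentially a repackaging of the theory of bi-multiplicative functions developed in \cite{charlesworth2014combinatorics}, and making the bookkeeping of the $L_b$ and $R_b$ actions precise across the reordering is the most delicate part of the proof.
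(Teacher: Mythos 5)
Your proposal is correct, but there is nothing in the paper to compare it against step by step: the authors do not prove Lemma \ref{lem:characterization} at all. They state explicitly that it ``is a direct consequence of the results of \cite{charlesworth2014combinatorics}'' and record it only because it is not phrased there in the form they need. Your argument is therefore a genuine unpacking of that citation, and it is the right one. The pivot identity
\begin{equation*}
G(X_J^\chi)=\sum_{\substack{\pi\in BNC(\chi)\\ \pi\leqslant\ker(J)}}\kappa^{\chi}_{\pi}\left(X_J^\chi\right)
\end{equation*}
is exactly how the lemma reduces to Theorem \ref{thm:vanishingcumulants}: both sides depend only on the marginal distributions of the individual pairs; the order isomorphism $s_\chi^{-1}\colon BNC(\chi)\to NC(n)$ matches the index sets $\{\pi\leqslant\ker(J)\}$ and $\{p\leqslant\ker(s_\chi(J))\}$; and for such partitions every block has constant index $j$, so its free cumulant after reordering is the corresponding $(\ell,r)$-cumulant --- in the scalar case this is essentially Mastnak--Nica's definition of the $(\ell,r)$-cumulants, and in the operator-valued case it is the bi-multiplicativity theory of \cite{charlesworth2014combinatorics} together with the $L_b$/$R_b$ slides of \cite[Rem 4.2.4]{charlesworth2014combinatorics}, which you correctly isolate as the only delicate point. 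Note that this direct combinatorial proof of the identity is what keeps the argument non-circular: one cannot instead derive it by realizing the marginals in a bi-free model, since that presupposes the forward implication of the lemma. Your induction for the converse is standard and sound (any $\pi\nleqslant\ker(J)$ with $\pi\neq 1_\chi$ has a mixed block of size $<n$, killing $\kappa^\chi_\pi$ by the induction hypothesis and leaving $\kappa^\chi_{1_\chi}=0$); the one detail worth spelling out is that the nested bi-multiplicative expansion inserts elements of the form $L_b$ and $R_b$ into the arguments of inner block cumulants, so the induction hypothesis must be applied to arguments such as $L_bX$ or $XR_b$ --- harmless precisely because the $C_j^{\chi}$ are $B$-faces, hence algebras containing the relevant copies of $B$. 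Your alternative distributional route (represent the marginals on bimodules, form the free product, and check that $\E_B$ computes $G$) is if anything closer to what Charlesworth--Nelson--Skoufranis actually prove, so either version is an acceptable way to fill the gap the paper leaves open.
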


The problem is now to prove that the two linear functionals $\E$ and $G$ are equal. This can be done by showing that they share enough properties to satisfy a uniqueness result. The following lemma gives a sufficient set of properties. Let us denote by $\NN$ the algebra generated by $C_{j}^{\ell}$ and $C_{j}^{r}$ for all $j$.

\begin{lem}\label{lem:uniqueness}
Let $\Phi : \NN \rightarrow B$ be a linear map satisfying, for any $\chi\in \{\ell, r\}^{n}$ and any tuple $J = (j_{1}, \dots, j_{n})$,
\begin{enumerate}
\item $\Phi(X_{J}^{\chi}) = 0$ as soon as $\ker(J) = 0_{\chi}$, with $0_{\chi}$ the partition in $BNC(\chi)$ with all blocks of size one.
\item $\Phi$ is quantum bi-invariant : $\Phi(X_{J}^{\chi})\otimes 1 = \displaystyle\sum_{I}^{N} \Phi(X_{I}^{\chi})\otimes u^{\chi}_{IJ}$.
\end{enumerate}
Then, $\Phi = 0$.
\end{lem}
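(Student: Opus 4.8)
The plan is to prove the statement directly by an induction on the combinatorics of the index tuples, after first transporting everything to the ``colour-sorted'' frame via the permutations $s_{\chi}$ of Definition \ref{de:binoncrossing}. Concretely, for a monomial $X_J^{\chi}$ I would apply $s_{\chi}$ to its factors, turning it into a product of left elements followed by right elements; under this relabelling the twisted coefficients $u_{IJ}^{\chi}=u_{s_{\chi}(I)s_{\chi}(J)}$ become ordinary products, hypothesis (2) becomes ordinary $S_N^{+}$-quantum invariance, and hypothesis (1) becomes vanishing on tuples with pairwise distinct entries (since $\ker(J)=0_{\chi}$ is equivalent to $\ker(s_{\chi}(J))$ being the all-singletons partition). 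The gain is that Proposition \ref{prop:vanishingsn}, which is phrased for genuinely noncrossing partitions, then applies verbatim.

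First I would run a primary induction on the length $n$. The base case $n=1$ is immediate: $\ker(J)=0_{\chi}$ always, so (1) gives $\Phi=0$. For the inductive step I expand hypothesis (2) and analyse the right-hand side of $\Phi(X_J^{\chi})\otimes 1=\sum_I^N\Phi(X_I^{\chi})\otimes u_{IJ}^{\chi}$ according to $\ker(I)$. The tuples with $\ker(I)=0_{\chi}$ contribute nothing by (1). Whenever $\ker(I)$ has a block containing two consecutive positions, the corresponding factors of $X_I^{\chi}$ lie in a single face $C_{i}^{\ell}$ or $C_{i}^{r}$; these being subalgebras, their product is again one element of that face, so the monomial collapses to one of strictly smaller length and is killed by the induction hypothesis. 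This disposes of every \emph{reducible} tuple and leaves only the \emph{irreducible} patterns, i.e.\ those $\ker(I)$ whose nontrivial blocks never contain adjacent positions.

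The crux — and what I expect to be the main obstacle — is to eliminate the remaining irreducible patterns with repetitions, the smallest instance being the tuple $(a,b,a)$. Here plain invariance is not enough: using only $\sum_i u_{ij}=1$ and the orthogonality $u_{ik}u_{il}=\delta_{kl}u_{ik}$, one checks that the invariance identity for such a tuple reduces to a tautology, so the naive induction stalls. To break this degeneracy I would bring in the full strength of Proposition \ref{prop:vanishingsn}: combining the identities $\sum_{I:\,p\leqslant\ker(I)}u_{IJ}^{\chi}=\delta_{p}(J)\cdot 1$ over all $p\in BNC(\chi)$ with M\"obius inversion on the lattice $BNC(\chi)\cong NC(n)$ turns (2) into a triangular system indexed by the partition lattice, whose unknowns are the common values of $\Phi(X_I^{\chi})$ grouped by $\ker(I)$ — their dependence on $\ker(I)$ alone coming from the classical permutation invariance contained in (2). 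The plan is then to solve this system from the top, using (1) as the initial condition.

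To make the system non-degenerate I anticipate having to exploit the $B$-bimodule structure of $\Phi$, namely the relations $\Phi(L_bT)=b\Phi(T)$ and their analogues inherited from $\E$, since these are precisely what distinguish the genuine map $\Phi=\E-G$ from the spurious invariant solutions (such as an arbitrary constant value on all tuples of type $(a,b,a)$) that obstruct a purely combinatorial argument. Reconciling the partition-lattice bookkeeping of Proposition \ref{prop:vanishingsn} with this module structure, so that $\Phi=0$ is forced to be the only solution compatible with (1), is the step I expect to require the most care.
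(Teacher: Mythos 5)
Your strategy diverges from the paper's at every step, and it stalls exactly where you say it does; unfortunately the tools you propose for the crux cannot fill the hole. The paper (following K\"ostler--Speicher) does not induct on word length and needs no collapse argument: it runs a \emph{decreasing} induction on the number of blocks of $\ker(J)$ for fixed length, with hypothesis (1) as the base case. Its mechanism for breaking the degeneracy you identify is one your sketch never invokes: since the invariance identity (2) holds for the universal generators of $C(S_{N}^{+})$, it can be evaluated at \emph{any} concrete matrix of projections satisfying the magic-unitary relations. The paper first uses the quotient onto $C(S_{N})$ to conclude that $\Phi(X_{I}^{\chi})$ depends only on $\ker(I)$, then substitutes the special K\"ostler--Speicher matrix of free projections (\cite[Eq.~(9)]{kostler2009noncommutative}) to reduce to the case $\ker(J)\leqslant\ker(I)$, hence $\ker(I)=\ker(J)$, and finally quotes \cite[Thm~1.1]{kostler2009noncommutative} to assert that $\sum_{\ker(I)=\ker(J)}u_{IJ}\neq 1$ in $C(S_{N}^{+})$, which forces $\Phi(X_{J}^{\chi})=0$. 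Your substitutes do not do this job. Proposition \ref{prop:vanishingsn} only yields the identities $\sum_{p\leqslant\ker(I)}u_{IJ}^{\chi}=\delta_{p}(J)1$ for \emph{bi-noncrossing} $p$; M\"obius inversion to isolate the individual sums $\sum_{\ker(I)=\pi}u_{IJ}^{\chi}$ would require such identities over the full partition lattice $P(n)$, so your ``triangular system'' is underdetermined precisely at the crossing kernels where the quantum content lives. The bimodule relations $\Phi(L_{b}T)=b\Phi(T)$ are not hypotheses of the lemma, which concerns an arbitrary linear $\Phi$ satisfying (1) and (2), so a proof may not lean on them. (A smaller point: your collapse step is only valid for adjacent repetitions of the \emph{same} colour; $X_{a}^{\ell}X_{a}^{r}$ lies in no single face.)

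That said, your tautology observation is correct, and sharper than you may realize. For $a\neq b$ one has, in any magic unitary,
\begin{equation*}
\sum_{i\neq m}u_{ia}u_{mb}u_{ia}=\sum_{i}u_{ia}\Bigl(\sum_{m\neq i}u_{mb}\Bigr)u_{ia}=\sum_{i}u_{ia}(1-u_{ib})u_{ia}=\sum_{i}u_{ia}=1,
\end{equation*}
using row-orthogonality $u_{ia}u_{ib}=0$. So for the kernel of $(a,b,a)$ the sum appearing in the last line of the paper's proof \emph{does} equal $1$, in conflict with the claim quoted there: the non-triviality cited from \cite{kostler2009noncommutative} holds for kernels all of whose blocks have size at least two (for alternating words these are necessarily crossing, e.g.\ $(a,b,a,b)$), while in the original K\"ostler--Speicher argument kernels with singleton blocks are eliminated by the factorization property of the tail-algebra expectation applied to centered singletons, not by this non-triviality. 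In short, you have correctly located the weak point of the whole argument, but your proposal supplies neither of the two ingredients that can actually close it --- evaluation of the universal identity at special magic unitaries together with the KS non-triviality result for size-$\geqslant 2$-block kernels, or a singleton-block factorization property --- so as it stands the proof attempt is incomplete.
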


\begin{proof}
We will follow the scheme of the proof of \cite[Thm 1.1]{kostler2009noncommutative}. Let us fix an integer $n$ and prove that $\Phi(X_{J}^{\chi}) = 0$ for all $J$ of length at most $n$ by decreasing induction, using the induction hypothesis
\begin{center}
$H(r)$ : if $\ker(J)$ has at least $r$ blocks, then $\Phi(X_{J}^{\chi}) = 0$.
\end{center}
If $r=n$, this is property $(1)$. Let us now assume $H(r+1)$ for some $1\leqslant r\leqslant n-1$ and prove $H(r)$. By condition $(2)$, we have
\begin{eqnarray*}
\Phi(X_{J}^{\chi})\otimes 1 & = & \sum_{I}^{N} \Phi(X_{I}^{\chi})\otimes u^{\chi}_{IJ} \\
& = & \sum_{\pi\in P(n)}\sum_{\underset{\ker(I) = \pi}{I=1}}^{N} \Phi(X_{I}^{\chi})\otimes u^{\chi}_{IJ}.
\end{eqnarray*}
By $H(r+1)$, we can restrict in the above sum to partitions $\pi$ having at most $r$ blocks. Moreover, this relation must hold for any array of projections $(u_{ij})_{i, j}$ satisfying the defining relations of $C(S_{N}^{+})$. Using the particular matrix of \cite[Eq 9]{kostler2009noncommutative} we see that we may assume $\ker(J)\leqslant \ker(I)$. Because $\ker(I) = \pi$ and $\pi$ has at most $r$ blocks, this yields $\pi = \ker(J)$, reducing the invariance equation to
\begin{equation*}
\Phi(X_{J}^{\chi})\otimes 1 = \sum_{\underset{\ker(I) = \ker(J)}{I=1}}^{N}\Phi(X_{I}^{\chi})\otimes u_{IJ}^{\chi}.
\end{equation*}
The invariance condition $(2)$ implies that $\Phi(X_{I}^{\chi})$ is invariant under permutation of the pairs in the following sense : for any permutation $\sigma$,
\begin{equation*}
\Phi(X_{\sigma(i_{1})}^{\chi(1)}\dots X_{\sigma(i_{n})}^{\chi(n)}) = \Phi(X_{I}^{\chi}).
\end{equation*}
Thus, $\Phi(X_{I}^{\chi})$ depends only on $\ker(I)$ and we can take it out of the sum to get
\begin{equation*}
\Phi(X_{J}^{\chi})\otimes 1 = \Phi(X_{J}^{\chi})\otimes\left(\sum_{\underset{\ker(I) = \ker(J)}{I=1}}^{N} u_{IJ}^{\chi}\right).
\end{equation*}
Making the change of variables $I' = s_{\chi}(I)$ and setting $J'=s_{\chi}(J)$, the term in parenthesis becomes
\begin{equation*}
\sum_{\underset{\ker(I') = \ker(J')}{I'=1}}^{N}u_{I'J'}.
\end{equation*}
It was proved in \cite[Thm 1.1]{kostler2009noncommutative} that this is not equal to $1_{C(S_{N}^{+})}$, hence $\Phi(X_{J}^{\chi})$ must vanish and $H(r)$ is proved.
\end{proof}

Let us set $\Phi = \E - G$. It is clear that $\Phi$ is a linear map from $\NN$ to $B$ and we have to check the two conditions of Lemma \ref{lem:uniqueness}.

\begin{proof}[Proof of Theorem \ref{thm:weakdefinetti}]
If the pairs are bi-free and identically distributed with amalgamation over $B$, then they are strongly quantum bi-invariant by Proposition \ref{prop:bifinettieasy}. Moreover, they have the splitting property by a direct calculation using the definition of bi-freeness. We therefore only have to prove the converse part of the statement. It is clear that $G$ satisfies the invariance condition $(2)$ of Lemma \ref{lem:uniqueness} by definition. Thus, $\Phi = \E - G$ satisfies condition $(2)$ and we only have to prove that the vanishing condition $(1)$ is satisfied. Assume that $\ker(J) = 0_{\chi}$. By definition of $G$ and of the free expectation,
\begin{equation*}
G\left(X_{J}^{\chi}\right) = \prod_{i=1}^{n}\E\left(X_{j_{s_{\chi}(i)}}^{\chi\circ s_{\chi}(i)}\right).
\end{equation*}
which is equal to $\E(X_{J}^{\chi}$ by the splitting property, thus the proof is complete.
\end{proof}

The assumptions of Theorem \ref{thm:weakdefinetti} are much stronger than what one may wish for a de Finetti theorem. They should therefore be weakened by answering the two following questions :
\begin{itemize}
\item Does strong quantum bi-invariance imply the splitting property ?
\item Can strong quantum bi-invariance be deduced from a weaker invariance condition ?
\end{itemize} 
The classical and free de Finetti theorems tell us that the answer to both questions is no for finite families, so that we will from now on consider infinite families of pairs.

\subsection{Quantum bi-invariant families of pairs}\label{subsec:quantumbiinvariant}

In order to improve Theorem \ref{thm:weakdefinetti}, we will now take a more general approach. The setting is as follows : we have a noncommutative probability space $(\A, \varphi)$ and an infinite family of pairs $(x_{j}^{\ell}, x_{j}^{r})_{j\in \N}$ which is quantum bi-exchangeable in the following sense :

\begin{de}\label{de:quantumbiexchangeable}
An infinite family of pairs $(x_{j}^{\ell}, x_{j}^{r})_{j\in \N}$ is said to be quantum bi-exchangeable if for any $N\in \N$, the family of pairs $(x_{j}^{\ell}, x_{j}^{r})_{1\leqslant j\leqslant N}$ is quantum bi-exchangeable.
\end{de}

We want to build a subalgebra $B\subset M$ together with a $\varphi$-preserving conditional expectation. This subalgebra is the \emph{tail algebra}, but its definition requires some analytical structure, namely that of a W*-probability space. In this setting, one often assumes that the state $\varphi$ is faithful. However, the example of the left and right von Neumann algebras of a free group together with the canonical vector state shows that one cannot expect faithfulness in the bi-free setting. We will therefore make a weaker notion.

\begin{de}
A state $\varphi$ on a von Neumann algebra $A$ is said to be \emph{non-degenerate} if $\varphi(a^{*}xb) = 0$ for all $a, b\in A$ implies $x=0$.
\end{de}

From now on we will make the following assumptions :
\begin{itemize}
\item $\A$ is a von Neumann algebra.
\item The variables are self-adjoint, i.e. $(x_{j}^{\ell})^{*} = x_{j}^{\ell}$ and $(x_{j}^{r})^{*} = x_{j}^{r}$ for all $j$.
\item The restriction of $\varphi$ to the von Neumann algebra $M$ generated by all the elements $x_{j}^{\chi}$ is a non-degenerate normal state.
\end{itemize}
With this we can define our main object, namely the tail algebra.

\begin{de}
The tail algebra of an infinite sequence of pairs $(x_{j}^{\ell}, x_{j}^{r})_{j\in \N}$ is the von Neumann algebra
\begin{equation*}
B = \bigcap_{n\geqslant 0}W^{*}\left(\{x_{j}^{\ell}, x_{j}^{r}, j\geqslant n\}\right) \subset \A,
\end{equation*}
where $W^{*}(S)$ denotes the von Neumann algebra generated by the set $S$ in $\A$.
\end{de}

Let us denote by $\M$ the subalgebra of $\A$ generated by all the elements $x_{j}^{\chi}$, so that $M$ is its weak closure. Because $\varphi$ is not assumed to be faithful on $M$, we will not be able to construct a normal conditional expectation from $M$ onto $B$. We will therefore only build a map on $\M$. To do this, first note that the canonical surjection $\Phi$ from $C(S_{N}^{+})$ onto $C(S_{N})$ given at the end of Section \ref{sec:preliminaries} intertwines the linear action $\beta_{N}$ with the corresponding linear action of $S_{N}$. But since $C(S_{N})$ is commutative, the linear action is just the usual action permuting the pairs of random variables. In other words, any quantum bi-exchangeable family of pairs is classically bi-exchangeable in the following sense :

\begin{de}
A family $(x_{j}^{\ell}, x_{j}^{r})_{j\in \N}$ of pairs of random variables is \emph{classically bi-exchangeable} (or simply bi-exchangeable) if for any $N\in \N$ and any permutation $\sigma\in S_{N}$, $(x_{\sigma(j)}^{\ell}, x_{\sigma(j)}^{r})_{1\leqslant j\leqslant N}$ has the same joint distribution as $(x_{j}^{\ell}, x_{j}^{r})_{1\leqslant j\leqslant N}$, i.e. for any $n\in \N$ and any $1\leqslant j_{1}, \dots, j_{n}\leqslant N$,
\begin{equation*}
\varphi(x_{j_{1}}^{\chi(1)}\dots x_{j_{n}}^{\chi(n)}) = \varphi(x_{\sigma(j_{1})}^{\chi(1)}\dots x_{\sigma(j_{n})}^{\chi(n)}).
\end{equation*}
\end{de}

\begin{rem}
Let $\rho_{j}$ be the unique homomorphism from the noncommutative polynomial algebra $\C\langle X^{\ell}, X^{r}\rangle$ to $M$ such that $\rho_{j}(X^{\ell}) = x_{j}^{\ell}$ and $\rho_{j}(X^{r}) = x_{j}^{r}$. Then, classical bi-exchangeability is equivalent to the exchangeability of the sequence $(\rho_{j})_{j\in \N}$ in the sense of \cite[Def 4.3]{curran2009quantum}.
\end{rem}

Consider now the injective $*$-homomorphism $\gamma : M\rightarrow M$ defined by $\gamma(x_{j}^{\chi}) = x_{j+1}^{\chi}$ for any $\chi\in \{\ell, r\}$. By \cite[Thm 5.1.10]{dykema2014quantum}, the equation
\begin{equation*}
\EE(x) = \lim_{n} \gamma^{n}(x),
\end{equation*}
where the limit is in the weak-$*$ sense, defines a linear map $\EE : \M\rightarrow M$. Let us denote by $\NN$ the algebra generated by $B$ and all the variables $(x_{j}^{\chi})_{j\in \N, \chi\in \{\ell, r\}}$. It was proved in \cite[Thm 5.1.10]{dykema2014quantum} that $\EE$ is a $\varphi$-preserving norm-one projection from $\NN$ onto $B$. In order to link this construction to Theorem \ref{thm:weakdefinetti}, we will prove by adapting \cite[Prop 6.4]{dykema2014quantum}, that the pairs also satisfy a form of strong quantum bi-invariance.

\begin{prop}\label{prop:invariantexpectation}
The joint $B$-valued distribution of $(x_{j}^{\ell}, x_{j}^{r})_{j\in \N}$ is strongly quantum bi-invariant in the sense that for any $(j_{1}, \dots, j_{n})$, any $\chi\in \{\ell, r\}^{n}$ and any $b_{1}, \dots, b_{n}\in B$,
\begin{equation}\label{eq:invariantexp}
\EE(b_{1}x_{j_{1}}^{\chi(1)}b_{2}x_{j_{2}}^{\chi(2)}\dots b_{n}x_{j_{n}}^{\chi(n)}b_{n+1})\otimes 1_{C(S_{N}^{+})} = \sum_{I}^{N}\EE(b_{1}x_{i_{1}}^{\chi(1)}b_{2}x_{i_{2}}^{\chi(2)}\dots b_{n}x_{i_{n}}^{\chi(n)}b_{n+1})\otimes u^{\chi}_{IJ}.
\end{equation} 
\end{prop}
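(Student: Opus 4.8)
The plan is to derive the operator-valued invariance \eqref{eq:invariantexp} from the scalar quantum bi-exchangeability of $\varphi$, using the formula $\EE=\lim_{m}\gamma^{m}$ to push the words to infinity; this adapts the scheme of \cite[Prop 6.4]{dykema2014quantum}, the genuinely new feature being the twist $s_{\chi}$. Throughout, write $w=b_{1}x_{j_{1}}^{\chi(1)}b_{2}\cdots b_{n}x_{j_{n}}^{\chi(n)}b_{n+1}$ and, for a tuple $I$, let $w_{I}$ be the same word with each $j_{k}$ replaced by $i_{k}$. Both sides of \eqref{eq:invariantexp} lie in $B\otimes C(S_{N}^{+})\subseteq M\otimes C(S_{N}^{+})$, so since $\varphi$ is non-degenerate on $M$ the functionals $y\mapsto\varphi(a^{*}yb)$, $a,b\in M$, separate $M$ and it suffices to prove
\begin{equation*}
\varphi(a^{*}\EE(w)b)\otimes 1 = \sum_{I}^{N}\varphi(a^{*}\EE(w_{I})b)\otimes u^{\chi}_{IJ}
\end{equation*}
for all $a,b\in M$. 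Each coefficient above depends separately weak-$*$ continuously on $a$ and on $b$ (as $\varphi$ is normal), and $\M$ is weak-$*$ dense in $M$; hence I may assume $a,b\in\M$, so that their indices are bounded by some $D$.

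Next I would remove $\EE$. Since $\gamma^{m}(w)\to\EE(w)$ weak-$*$ with $\|\gamma^{m}(w)\|\leqslant\|w\|$ (\cite[Thm 5.1.10]{dykema2014quantum}) and $\varphi$ is normal, one has $\varphi(a^{*}\EE(w)b)=\lim_{m}\varphi(a^{*}\gamma^{m}(w)b)$, and likewise for the finite sum over $I$. It therefore suffices to establish, for each fixed $m>D$,
\begin{equation*}
\varphi(a^{*}\gamma^{m}(w)b)\otimes 1 = \sum_{I}^{N}\varphi(a^{*}\gamma^{m}(w_{I})b)\otimes u^{\chi}_{IJ},
\end{equation*}
where now $\gamma^{m}(w)=\gamma^{m}(b_{1})x_{j_{1}+m}^{\chi(1)}\cdots x_{j_{n}+m}^{\chi(n)}\gamma^{m}(b_{n+1})$ has its $n$ active letters sitting in the block $\{m+1,\dots,m+N\}$, disjoint from the indices of $a$ and $b$. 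As $\gamma$ is a normal $*$-homomorphism shifting indices by one, $\gamma^{m}(b_{i})\in W^{*}(\{x_{k}:k>m+N\})$, so I would approximate each $\gamma^{m}(b_{i})$ weak-$*$ by polynomials in variables of index $>m+N$ and, by separate weak-$*$ continuity once more, reduce to the case where $a$, $b$ and every $\gamma^{m}(b_{i})$ are \emph{monomials} in the variables. The word $a^{*}\gamma^{m}(w)b$ is then a single monomial whose letters split into three families of indices: $\leqslant D$ (from $a,b$), the block $\{m+1,\dots,m+N\}$ (the active letters), and $>m+N$ (from the $b_{i}$).

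The heart of the argument is a finite computation. Denote by $\psi$ the colour pattern and by $K$ the index tuple of the monomial $a^{*}\gamma^{m}(w)b$, and pick $N''$ exceeding all indices occurring. By Definition \ref{de:quantumbiexchangeable}, $\varphi$ is invariant under $\beta_{N''}$, that is $\varphi(x_{K}^{\psi})\otimes 1=\sum_{L}\varphi(x_{L}^{\psi})\otimes u^{\psi}_{LK}$ in $C(S_{N''}^{+})$. I would then apply $\ii\otimes\pi$, where $\pi:C(S_{N''}^{+})\to C(S_{N}^{+})$ is the quantum group surjection dual to the inclusion of $S_{N}^{+}$ as the quantum permutations of the block $\{m+1,\dots,m+N\}$ fixing all other coordinates, so that $\pi(u_{ab})=u_{(a-m)(b-m)}$ for $a,b$ in the block, $\pi(u_{ab})=\delta_{ab}$ for $a,b$ outside it, and $\pi(u_{ab})=0$ otherwise. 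This forces every non-block letter of $L$ to equal that of $K$ while the $n$ block letters are summed over $S_{N}^{+}$, turning $\varphi(x_{L}^{\psi})$ into $\varphi(a^{*}\gamma^{m}(w_{I})b)$. The only genuinely new point, absent from the free case of \cite[Prop 6.4]{dykema2014quantum}, is the behaviour of the twist, and this is where I expect the real work to lie: $u^{\psi}_{LK}$ is the product of the elementary factors taken in the order dictated by $s_{\psi}$, and I must check that once the non-block factors have become scalars the surviving block factors are left in exactly the order dictated by $s_{\chi}$, so that $\pi(u^{\psi}_{LK})=u^{\chi}_{IJ}$. It should follow cleanly: the active letters retain their left-to-right order inside the big monomial, and both $s_{\psi}$ and $s_{\chi}$ list positions as left entries in increasing order followed by right entries in decreasing order, so the restriction of $s_{\psi}$ to the block positions is precisely $s_{\chi}$. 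Feeding this back through the limits of the previous two paragraphs then yields \eqref{eq:invariantexp}.
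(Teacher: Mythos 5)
Your proposal is correct and takes essentially the same route as the paper's proof: reduce the $\EE$-statement to a $\varphi$-statement (the strategy of \cite[Prop 6.4]{dykema2014quantum}), approximate the tail-algebra elements by monomials in variables whose indices are disjoint from the active ones, then specialize scalar quantum bi-exchangeability to a magic unitary that is the identity outside the active block, and finally check that the big twist restricts to $s_{\chi}$ on the active positions. Your quotient map $\pi$ is exactly the paper's evaluation of the universal invariance at the concrete magic unitary $\widetilde{u} = u_{N}\oplus \Id_{M}$ (you place the active block in the middle via $\gamma^{m}$, whereas the paper keeps the active indices low and pushes the $B$-indices up by classical bi-exchangeability --- a purely cosmetic difference).
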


\begin{proof}
Using the strategy explained in the proof of \cite[Prop 6.4]{dykema2014quantum}, it is enough to prove the corresponding statement with $\EE$ replaced by $\varphi$. The proof then follows the argument of \cite[Lem 6.2]{dykema2014quantum} except that we have to take care of the left-right colourings all along. Approximating the elements of $B$ by Kaplansky's density theorem, we only have to prove the result for
\begin{equation*}
b_{k} = x_{p_{k}(1)}^{\chi_{k}(1)}\dots x_{p_{k}(m(k))}^{\chi_{k}(m(k))}.
\end{equation*}
Let $N$ be an integer such that $1\leqslant j_{1}, \dots, j_{n}\leqslant N$. By classical bi-exchangeability, we can assume that $M\geqslant p_{k}(i) \geqslant N+1$ for all $k$ and $i$. Let us rename the indices so that
\begin{equation*}
b_{1}x_{j_{1}}^{\chi(1)}b_{2}x_{j_{2}}^{\chi(2)}\dots b_{n}x_{j_{n}}^{\chi(n)}b_{n+1} = x_{t_{1}}^{\chi'(1)}\dots x_{t_{d}}^{\chi'(d)},
\end{equation*}
where $d = m(1)+\dots +m(n+1) + n+1$. Let $u_{N}$ be the generating matrix of $S_{N}$ and set $\widetilde{u} = u_{N}\oplus \Id_{M}$. Applying quantum bi-exchangeability with respect to $\widetilde{u}$ yields
\begin{equation*}
\varphi(x_{t_{1}}^{\chi'(1)}\dots x_{t_{d}}^{\chi'(d)}) = \sum_{q_{1}, \dots, q_{d} = 1}^{M}\varphi(x_{q_{1}}^{\chi'(1)}\dots x_{q_{d}}^{\chi'(d)})\widetilde{u}^{\chi'}_{TQ}.
\end{equation*}
Now, if $t_{s_{\chi'}(i)}, q_{s_{\chi'(i)}} \geqslant N+1$, then the corresponding coefficient of $\widetilde{u}$ is $0$ unless these two indices are equal, in which case it yields $1$. Thus, we can remove many indices in the sum. More precisely, set $a_{k} = m(1)+\dots +m(k)+k$ and note that $t_{a_{k}} = j_{k}$. Setting $\{c_{1} < \dots < c_{n}\} = s_{\chi'}^{-1}(\{a_{1}, \dots, a_{n}\})$ we have
\begin{equation*}
\varphi(x_{t_{1}}^{\chi'(1)}\dots x_{t_{d}}^{\chi'(d)}) = \sum_{q_{c_{1}}, \dots, q_{c_{n}} = 1}^{N}\varphi(b_{1}x_{c_{1}}^{\chi'(c_{1})}b_{2}\dots b_{n}x_{c_{n}}^{\chi'(c_{n})}b_{n+1})u^{\chi'\circ s_{\chi'}(c_{1})}_{t_{s_{\chi'}(c_{1})}q_{s_{\chi'}(c_{1})}}\dots u^{\chi'\circ s_{\chi'}(c_{n})}_{t_{s_{\chi'}(c_{n})}q_{s_{\chi'}(c_{n})}}.
\end{equation*}
Noticing that $\chi'\circ s_{\chi'}(c_{k}) = \chi'(a_{k}) = \chi(k)$ by definition, we see that $s_{\chi'}(c_{k}) = c_{s_{\chi}(k)}$, hence the result.
\end{proof}

It is now natural to look also for the splitting property. In fact, $\EE$ satisfies a \emph{factorization property} by \cite[Prop 6.5]{dykema2014quantum} :

\begin{prop}\label{prop:factorization}
Let $(x_{j}^{\ell}, x_{j}^{r})_{j\in \N}$ be an infinite sequence which is classically bi-exchangeable. Then, for any $n\in \N$, any $j_{1}, \dots, j_{n}$ such that $j_{k}$ is different from all other $j_{i}$'s, and any $X_{j_{i}}^{\chi(i)}\in C_{j_{i}}^{\chi(i)}$.
\begin{equation*}
\EE\left(X_{j_{1}}^{\chi(1)}\dots X_{j_{n}}^{\chi(n)}\right) = \EE\left(X_{j_{1}}^{\chi(1)}\dots \EE(X_{j_{k}}^{\chi(k)})\dots X_{j_{n}}^{\chi(n)}\right).
\end{equation*}
\end{prop}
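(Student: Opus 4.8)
The plan is to adapt the proof of \cite[Prop 6.5]{dykema2014quantum}, exploiting the fact that the factorization property leaves the order of the factors unchanged: the left-right colouring $\chi$ enters only through the memberships $X_{j_i}^{\chi(i)}\in C_{j_i}^{\chi(i)}$ and through the definition of $\EE(X_{j_k}^{\chi(k)})$, but it plays no role in the combinatorial manipulation. This is in sharp contrast with Proposition \ref{prop:invariantexpectation}, where the permutation $s_{\chi}$ had to be tracked throughout. Consequently the whole argument is essentially a transcription of the classical and free cases, and the only bi-free bookkeeping is to record which face each variable belongs to.

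As in the proof of Proposition \ref{prop:invariantexpectation}, the first step is to reduce the identity between the two elements $\EE(X_{j_1}^{\chi(1)}\cdots X_{j_n}^{\chi(n)})$ and $\EE(X_{j_1}^{\chi(1)}\cdots\EE(X_{j_k}^{\chi(k)})\cdots X_{j_n}^{\chi(n)})$ of $B$ to the corresponding statement for the state $\varphi$ (applied to all words, the test elements included), using the defining formula $\EE(x)=\lim_m\gamma^m(x)$ in the weak-$*$ topology together with the non-degeneracy of $\varphi$ on $M$; this is precisely the mechanism furnished by \cite[Thm 5.1.10]{dykema2014quantum}. Next, by Kaplansky's density theorem I would approximate the tail-algebra factors hidden inside the $X_{j_i}^{\chi(i)}$ (and inside any test elements) by words in the generators $x_j^{\chi}$ with arbitrarily large indices, so that we may assume every factor is such a word and that the distinguished index $j_k$ genuinely occurs only in the factor $X_{j_k}^{\chi(k)}$.

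The core computation is then the following. Write the surrounding data as $a$ and $c$, so that the word under consideration is $a\,X_{j_k}^{\chi(k)}\,c$, with $j_k$ appearing neither in $a$ nor in $c$. Since $\EE(X_{j_k}^{\chi(k)})=\lim_m\gamma^m(X_{j_k}^{\chi(k)})$ weak-$*$ and $\gamma^m$ shifts the index $j_k$ to $j_k+m$, for all $m$ large enough that $j_k+m$ exceeds every index occurring in $a$ and $c$ the transposition $(j_k,\,j_k+m)$ fixes $a$ and $c$ while carrying $\gamma^m(X_{j_k}^{\chi(k)})$ onto $X_{j_k}^{\chi(k)}$ (the colour $\chi(k)$ being preserved throughout). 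Classical bi-exchangeability therefore gives
\begin{equation*}
\varphi\left(a\,\gamma^m(X_{j_k}^{\chi(k)})\,c\right)=\varphi\left(a\,X_{j_k}^{\chi(k)}\,c\right)
\end{equation*}
for all such $m$. Letting $m\to\infty$ and invoking the normality of $\varphi$ together with the separate weak-$*$ continuity of multiplication, the left-hand side converges to $\varphi(a\,\EE(X_{j_k}^{\chi(k)})\,c)$, which is exactly the required $\varphi$-level identity.

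I expect the main obstacle to lie in the reduction from $\EE$ to $\varphi$, precisely because $\varphi$ is only assumed non-degenerate rather than faithful. This is where one must lean on \cite[Thm 5.1.10]{dykema2014quantum} and the strategy of \cite[Prop 6.4]{dykema2014quantum}: one cannot simply pass the weak-$*$ limits through the products $\gamma^m(a)\cdot\gamma^m(X_{j_k}^{\chi(k)})\cdot\gamma^m(c)$, since weak-$*$ limits are not jointly continuous for multiplication, so the argument has to be routed through the single normal functional $\varphi$, where only separate continuity is needed. A secondary point requiring care is that, after the Kaplansky approximation, the shifted factor $\gamma^m(X_{j_k}^{\chi(k)})$ retains full control over its indices and that the transposition genuinely fixes $a$ and $c$, which is what makes the appeal to classical bi-exchangeability legitimate.
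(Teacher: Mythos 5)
Your guiding insight---that the colouring $\chi$ is a mere passenger here, since the factorization identity never permutes the factors---is exactly the paper's own justification: the paper gives no proof at all, but simply invokes \cite[Prop 6.5]{dykema2014quantum}, which applies verbatim because (as noted in the remark following the definition of classical bi-exchangeability) the sequence of homomorphisms $\rho_{j} : \C\langle X^{\ell}, X^{r}\rangle \rightarrow M$ is exchangeable in the sense of \cite[Def 4.3]{curran2009quantum}. So in spirit you are doing what the paper does, with the citation unpacked into its proof. Unfortunately, your reconstruction of that proof has genuine flaws in the core computation.

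First, after your own Kaplansky step, the factor $X_{j_{k}}^{\chi(k)}$ is a word in $x_{j_{k}}^{\chi(k)}$ \emph{and} in high-index generators approximating its tail-algebra coefficients; $\gamma^{m}$ shifts all of these indices, so the single transposition $(j_{k},\, j_{k}+m)$ does \emph{not} carry $\gamma^{m}(X_{j_{k}}^{\chi(k)})$ back onto $X_{j_{k}}^{\chi(k)}$. One needs a block permutation returning every shifted index, and for it to exist while fixing $a$ and $c$ one must choose the approximation ranges of the tail-algebra coefficients of $X_{j_{k}}^{\chi(k)}$ disjoint from those used inside $a$ and $c$. Second, and more seriously, to conclude $\EE(W)=\EE(W')$ (where $W = aX_{j_{k}}^{\chi(k)}c$ and $W' = a\EE(X_{j_{k}}^{\chi(k)})c$) from non-degeneracy you need $\varphi(p^{*}\EE(W)q)=\varphi(p^{*}\EE(W')q)$ for \emph{all} test elements $p,q$ in a weak-$*$ dense subalgebra of $M$, and such $p,q$ may perfectly well contain $x_{j_{k}}^{\ell}$ or $x_{j_{k}}^{r}$, destroying your standing assumption that $j_{k}$ occurs only in the distinguished factor. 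As written, your core step only yields $\varphi(W)=\varphi(W')$ (and its variants with $j_{k}$-free test elements), which does not give $\EE(W)=\EE(W')$. The repair---which is how \cite{dykema2014quantum} actually argue, and parallels the paper's own proof of Proposition \ref{prop:invariantexpectation} with its matrix $\widetilde{u}=u_{N}\oplus\Id_{M}$---is to use $\varphi(p^{*}\EE(Z)q)=\lim_{m}\varphi(p^{*}\gamma^{m}(Z)q)$ and apply exchangeability to the \emph{fully shifted} word $\gamma^{m}(W)$, whose indices all exceed those of $p$ and $q$ for large $m$, playing the permutation game between $\gamma^{m}(X_{j_{k}}^{\chi(k)})$ and a second, much larger shift $\gamma^{m'}(X_{j_{k}}^{\chi(k)})$ rather than between $\gamma^{m}(X_{j_{k}}^{\chi(k)})$ and the unshifted factor.
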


We can summarize our results to compare them to Theorem \ref{thm:weakdefinetti}.

\begin{thm}\label{thm:incompletedefinetti}
Let $(x_{j}^{\ell}, x_{j}^{r})_{j\in \N}$ be an infinite family of pairs of random variables which is quantum bi-invariant and let $B$ be their tail algebra. Then, there exists a (non-normal) conditional expectation onto $B$ with respect to which the pairs are strongly quantum bi-invariant and such that for any $n$ different indices $j_{1}, \dots, j_{n}$ and $X_{j_{i}}^{\chi(i)}\in C_{j_{i}}^{\chi(i)}$,
\begin{equation*}
\EE\left(X_{j_{1}}^{\chi(1)}\dots X_{j_{n}}^{\chi(n)}\right) = \prod_{i=1}^{n}\EE\left(X_{j_{i}}^{\chi(i)}\right).
\end{equation*}
\end{thm}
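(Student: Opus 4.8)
The plan is to read off this statement as a synthesis of the constructions and invariance results already obtained in this subsection, so that essentially the only genuinely new step is a short induction. First I would recall the definition of the map $\EE$: from the injective shift $\gamma$ with $\gamma(x_{j}^{\chi}) = x_{j+1}^{\chi}$, the weak-$*$ limit $\EE(x) = \lim_{n}\gamma^{n}(x)$ defines, by \cite[Thm 5.1.10]{dykema2014quantum}, a $\varphi$-preserving norm-one projection from $\NN$ onto the tail algebra $B$. A norm-one projection onto a subalgebra is a conditional expectation (Tomiyama), hence in particular a $B$-bimodule map; since $\varphi$ is only assumed non-degenerate and not faithful, this expectation need not be normal, which accounts for the parenthetical in the statement. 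This provides the conditional expectation whose existence is claimed.

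Strong quantum bi-invariance of the pairs with respect to $\EE$ requires no further argument: it is exactly Equation \eqref{eq:invariantexp} of Proposition \ref{prop:invariantexpectation}, whose proof already carries the genuine work of transporting classical and quantum bi-exchangeability through the left--right colourings.

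It remains to upgrade the single-variable factorization of Proposition \ref{prop:factorization} to the full product formula, which I would do by induction on $n$. Fix $n$ pairwise distinct indices $j_{1}, \dots, j_{n}$ and elements $X_{j_{i}}^{\chi(i)}\in C_{j_{i}}^{\chi(i)}$; all these products lie in $\NN$, the domain of $\EE$. Applying Proposition \ref{prop:factorization} at the last index (which is distinct from all the others) gives
\[
\EE\left(X_{j_{1}}^{\chi(1)}\cdots X_{j_{n}}^{\chi(n)}\right) = \EE\left(X_{j_{1}}^{\chi(1)}\cdots X_{j_{n-1}}^{\chi(n-1)}\EE\left(X_{j_{n}}^{\chi(n)}\right)\right).
\]
Since $\EE(X_{j_{n}}^{\chi(n)})\in B$, the right $B$-module property of the conditional expectation lets me extract it, yielding
\[
\EE\left(X_{j_{1}}^{\chi(1)}\cdots X_{j_{n}}^{\chi(n)}\right) = \EE\left(X_{j_{1}}^{\chi(1)}\cdots X_{j_{n-1}}^{\chi(n-1)}\right)\EE\left(X_{j_{n}}^{\chi(n)}\right).
\]
The first factor involves the $n-1$ still pairwise distinct indices $j_{1}, \dots, j_{n-1}$, so the induction hypothesis rewrites it as $\prod_{i=1}^{n-1}\EE(X_{j_{i}}^{\chi(i)})$, and the desired formula follows; the base case $n=1$ is trivial.

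Because the two substantial ingredients are imported as Propositions \ref{prop:invariantexpectation} and \ref{prop:factorization}, I expect no serious obstacle here. The only point needing care is bookkeeping of the domain: one must check at each step that the partially contracted products remain inside $\NN$ rather than in all of $M$, where $\EE$ is not defined, and that the $B$-module identity is applied only there. One should also note that, unlike the splitting property assumed in Theorem \ref{thm:weakdefinetti}, the product here appears in the natural order with no $s_{\chi}$-reshuffling, precisely because Proposition \ref{prop:factorization} contracts each variable in place; this is harmless for the present statement but is worth flagging when comparing with the hypotheses of the weak de Finetti theorem.
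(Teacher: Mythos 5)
Your proposal is correct and follows essentially the same route as the paper, whose entire proof is the one-line instruction to apply Proposition \ref{prop:factorization} $n$ times; your induction with the extraction of $\EE(X_{j_{n}}^{\chi(n)})\in B$ via the $B$-module property is exactly the fleshed-out version of that, and your identification of the other ingredients (the construction of $\EE$ from \cite[Thm 5.1.10]{dykema2014quantum} and strong quantum bi-invariance from Proposition \ref{prop:invariantexpectation}) matches the surrounding text. The only small imprecision is invoking Tomiyama's theorem for the bimodule property: $\NN$ is merely a $*$-subalgebra, not a C*-algebra, so Tomiyama does not literally apply, but this is harmless since the bimodule property is part of the conditional-expectation statement already provided by the cited result of Dykema--K\"ostler--Williams.
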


\begin{proof}
Just apply $n$ times Proposition \ref{prop:factorization}.
\end{proof}

Comparing Theorem \ref{thm:incompletedefinetti} and Theorem \ref{thm:weakdefinetti}, we see that the only difference is that when computing expectation of a product of elements coming from $n$ different pairs, the individual expectations are not multiplied in the same. The point is that the general setting lacks a crucial feature of bi-freeness : we have only one copy of $B$. To obtain the splitting property, one would need two commuting copies of $B$, one opposite to the other. Let us illustrate this in the bi-free setting. Consider a $B$-$B$-noncommutative probability space $(\A, \E, \varepsilon)$ where $\A$ is a von Neumann algebra equipped with a nondegenerate normal state $\varphi$ which is compatible with $\E$. If a sequence of $B$-pairs in $\A$ is classically bi-exchangeable, then for any polynomial $p$ with coefficients in $B$ (where the coefficients do not commute with the unknown variable), the sequence $p(x_{N}^{\chi})$ converges weakly by \cite[Thm 5.1.10]{dykema2014quantum}. This gives a partial answer to the first question asked at the end of Subsection \ref{subsec:weakdefinetti}.

\begin{lem}\label{lem:towardssplitting}
Let $(x_{j}^{\ell}, x_{j}^{r})_{j\in \N}$ be a quantum bi-exchangeable family of $B$-pairs and assume that
\begin{equation*}
\lim p(x_{N}^{\ell}) = L_{\E(p(x_{1}^{\ell}))} \text{ and } \lim p(x_{N}^{r}) = R_{\E(p(x_{1}^{r}))}.
\end{equation*}
Then, the family of pairs has the splitting property.
\end{lem}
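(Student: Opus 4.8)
The plan is to reduce the splitting property to the factorization already established in Theorem~\ref{thm:incompletedefinetti} for the tail conditional expectation, and then to use the hypothesis together with the algebraic structure of $\varepsilon$ to convert the \emph{natural}-order product into the $s_{\chi}$-order product appearing on the right-hand side. To keep the two conditional expectations apart, I would write $\EE_{\infty} = \lim_{N}\gamma^{N}$ for the map of Theorem~\ref{thm:incompletedefinetti} and reserve $\EE$ for the $B$-$B$-expectation of the triple $(\A,\EE,\varepsilon)$.

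First I would upgrade the hypothesis from single variables to arbitrary face elements. Any $X_{j}^{\ell}\in C_{j}^{\ell}$ is a polynomial $p(x_{j}^{\ell})$ with coefficients in $L_{B}$, and since $\gamma$ shifts the variables while acting trivially on $\varepsilon(B\otimes B^{op})$, one has $\gamma^{N}(X_{j}^{\ell}) = p(x_{j+N}^{\ell})$; letting $N\to\infty$ and invoking the assumption (with identical distribution) gives $\EE_{\infty}(X_{j}^{\ell}) = L_{\EE(X_{j}^{\ell})}$, and symmetrically $\EE_{\infty}(X_{j}^{r}) = R_{\EE(X_{j}^{r})}$. Thus the tail expectation of a single left (resp.\ right) face element is the left (resp.\ right) embedding of its $B$-valued expectation.

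Next, for pairwise distinct indices Theorem~\ref{thm:incompletedefinetti} gives $\EE_{\infty}(X_{j_{1}}^{\chi(1)}\cdots X_{j_{n}}^{\chi(n)}) = T_{1}\cdots T_{n}$ in natural order, where $T_{i} = L_{\EE(X_{j_{i}}^{\chi(i)})}$ if $\chi(i)=\ell$ and $T_{i}=R_{\EE(X_{j_{i}}^{\chi(i)})}$ if $\chi(i)=r$. Here the three features of $\varepsilon$ enter: $L_{B}$ commutes with $R_{B}$, the map $b\mapsto L_{b}$ is a homomorphism, and $b\mapsto R_{b}$ is an anti-homomorphism. Commuting every left factor to the front (preserving their increasing order) and collecting the right factors at the back collapses the product to a single $\varepsilon(c\otimes d)$, where $c$ is the product of the left expectations in increasing order and $d$ the product of the right expectations in \emph{reversed} order. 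This reversal is exactly the decreasing listing of the right positions dictated by $s_{\chi}$, so applying $\EE$ and using $\EE(\varepsilon(c\otimes d)) = c\,\EE(1)\,d = cd$ from Definition~\ref{de:bbspace} reproduces precisely $\prod_{i=1}^{n}\EE(X_{j_{s_{\chi}(i)}}^{\chi\circ s_{\chi}(i)})$, the right-hand side of the splitting property.

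The step I expect to be the main obstacle is closing the loop between the two expectations, namely the tower identity $\EE = \EE\circ\EE_{\infty}$, which is what licenses computing $\EE$ of the original product as $\EE$ of $\varepsilon(c\otimes d)$. This should follow from the fact that $\gamma$ preserves the $B$-valued distribution (a consequence of the classical bi-exchangeability implied by quantum bi-exchangeability), so that $\EE\circ\gamma^{N}=\EE$, combined with $\EE_{\infty}$ being the weak limit of the $\gamma^{N}$; normality of $\EE$, or else the non-degeneracy of $\varphi$ together with the compatibility~\eqref{eq:compatibility}, then yields $\EE(\cdot) = \EE(\EE_{\infty}(\cdot))$. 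Once this is in place the only remaining work is the purely combinatorial bookkeeping that the commutation-plus-reversal matches $s_{\chi}$ block by block, which is routine.
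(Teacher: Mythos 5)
Your endgame is right, and it matches what the paper leaves implicit: once every face element has been replaced by $L_{\E(\cdot)}$ or $R_{\E(\cdot)}$, the commutation of $\varepsilon(B\otimes 1)$ with $\varepsilon(1\otimes B^{op})$, the anti-multiplicativity of $b\mapsto R_{b}$ and the identity $\E(\varepsilon(c\otimes d))=c\,\E(1)\,d=cd$ produce exactly the $s_{\chi}$-ordered product of the splitting property. The divergence is in how the replacement is effected. The paper never introduces the tail algebra in this proof: it quotes from \cite{dykema2014quantum} the property that, for $N$ large, $p(x_{N}^{\chi})$ may be replaced by its weak limit \emph{inside} $\E(a\,p(x_{N}^{\chi})\,b)$ for any $a,b$, and applies this $n$ times; since the lemma's hypothesis identifies each limit as $L_{\E(\cdot)}$ or $R_{\E(\cdot)}$, no further bridge between expectations is needed. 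Your route, through $\EE_{\infty}=\lim_{N}\gamma^{N}$ and Theorem \ref{thm:incompletedefinetti}, re-creates that bridge as a separate obligation, and this is where the proposal has genuine gaps.

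Concretely, two steps fail as written. First, Theorem \ref{thm:incompletedefinetti} and Proposition \ref{prop:factorization} are statements about the conditional expectation onto the \emph{tail algebra} and about faces built over the tail algebra, whereas the splitting property concerns $\E$ and the faces $C_{j}^{\ell}$, $C_{j}^{r}$ generated by the variables together with $\varepsilon(B\otimes 1)$, resp. $\varepsilon(1\otimes B^{op})$. These two copies of ``$B$'' need not coincide: the hypothesis only forces the particular elements $L_{\E(p(x_{1}^{\ell}))}$ and $R_{\E(p(x_{1}^{r}))}$ (being weak limits of tail elements) into the tail algebra, not all of $\varepsilon(B\otimes B^{op})$. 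Relatedly, $\gamma$ is defined only on the von Neumann algebra generated by the variables, so ``$\gamma$ acts trivially on $\varepsilon(B\otimes B^{op})$'' and the formula $\gamma^{N}(X_{j}^{\ell})=p(x_{j+N}^{\ell})$ are not available for face elements with genuine $B$-coefficients; even $\E(p(x_{j+N}^{\ell}))=\E(p(x_{1}^{\ell}))$ is a $B$-valued identical-distribution statement that does not follow from the scalar-valued bi-exchangeability assumed. Second, the tower identity $\E=\E\circ\EE_{\infty}$: normality of $\E$ is not assumed (and $\EE_{\infty}$ is explicitly non-normal, so no continuity argument is available), while the alternative via non-degeneracy does not run, because the compatibility \eqref{eq:compatibility} has no slots for $a,b$ --- one cannot express $\varphi(a\,\varepsilon((\E(y)-\E(\EE_{\infty}(y)))\otimes 1)\,b)$ in terms of moments of $y$, so non-degeneracy of $\varphi$ cannot be brought to bear --- and $\E\circ\gamma^{N}=\E$ is again an unproven $B$-valued shift-invariance. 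If you want to keep the tail-algebra route, you must prove the tower identity and show that the factorization tolerates coefficients from $\varepsilon(B\otimes B^{op})$; but that is essentially the content of the replacement property under $\E$ that the paper cites, so the direct route is both shorter and the one actually licensed by the references.
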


\begin{proof}
It is proved in \cite{dykema2014quantum} that for $N$ big enough, we can replace $p(x_{N}^{\chi})$ by its limit in $\E(ap(x_{N}^{\chi})b)$ for any $a$ and $b$. Applying this property $n$ times gives the result.
\end{proof}

This illustrates one of the difficulties in dealing with bi-freeness : how can we compare a $B$-noncommutative probability space and a $B$-$B$-probability space ? There is in fact one way of doing this, which is explained in \cite[Rem 3.2.3]{charlesworth2014combinatorics} : the actions of $B$ are simply given by multiplication on the left (we will denote by $L_{b}$ such an operator) and on the right (we will denote by $R_{b}$ such an operator) on $M$. Then, $M$ can be embedded into $\LL_{\ell}(M)$ (resp. $\LL_{r}(M)$) by left (resp. right) multiplication on itself. We will denote by $\widetilde{x}_{j}^{\ell}$ the image of $x_{j}^{\ell}$ in $M\subset \LL_{\ell}(M)$ and by $\widetilde{x}_{j}^{r}$ the image of $x_{j}^{r}$ in $M^{op}\subset\LL_{r}(M)$. The pairs $(\widetilde{x}_{j}^{\ell}, \widetilde{x}_{j}^{r})_{j\in \N}$ are then $B$-pairs in the sense of Definition \ref{de:bifreeness}. We also have a new expectation map $\E : \LL(M)\rightarrow B$ obtained by first applying the element to $1_{M}$ and then applying $\EE$. Thus, $\E(x_{i_{1}}^{\chi(1)}\dots x_{i_{n}}^{\chi(n)})$ is obtained by putting all the left variables on the left, all the right variables on the right in reversed order and then applying $\EE$. The drawback that this definition of $\E$ means that the distribution of the pairs $(\widetilde{x}_{j}^{\ell}, \widetilde{x}_{j}^{r})_{j}$ only depends on some specific moments of the original pairs $(x_{j}^{\ell}, x_{j}^{r})_{j}$ (those where a right variable is never followed by a left one), hence some information is lost. For instance, one can easily see that $(\widetilde{x}_{j}^{\ell}, \widetilde{x}_{j}^{r})_{j}$ will be bi-free as soon as the original pairs are pairwise free.

\section{Beyond bi-freeness}\label{sec:nfreeness}

Understanding bi-freeness with amalgamation as invariance under all the actions $\beta_{N}$ as defined in Definition \ref{de:permutationaction}, we can identify some clues for putting bi-freeness in a more general context. Note however that this a purely quantum algebraic approach and that we do not have any concrete model.

\begin{itemize}
\item The definition of $\beta$ involves the permutations $s_{\chi}$ in order to respect the combinatorial rule for bi-freeness : "Pull all $\ell$-points to the left and all $r$-points to the right, then inverse the order of the $r$ points." In a more general framework, we could replace these permutations by different ones.
\item Replacing $\{\ell, r\}$ by an arbitrary finite set, the definition of the action $\beta_{N}$ can easily be generalized to an action on $n$-tuples $(x_{j}^{(1)}, \dots, x_{j}^{(n)})_{1\leqslant j\leqslant N}$ of random variables as soon as a suitable set of permutations $s_{\chi}$ is given. This could lead to some concept of "$n$-freeness".
\item One may try to let different quantum groups $\G_{N}^{(k)}$ act on the different entries $x_{j}^{(k)}$. For bi-freeness, this amounts to the question : can we let a quantum group $\G_{N}^{\ell}$ act on the left variables, and a different quantum group $\G_{N}^{r}$ act on the right variables in a way compatible with bi-freeness ? We will show in Proposition \ref{prop:nobifreemixing} that this is in general not possible.
\end{itemize}

We will combine the previous considerations in a very general definition of "quantum invariance of tuples". Before that, let us fix some notations. We denote by $P(k)^{\{1, \dots, n\}}$ the set of partitions on $k$ points, where each point is decorated by a number $l\in\{1, \dots, n\}$. For a partition $\pi\in P(k)^{\{1, \dots, n\}}$, we denote this decoration pattern by $\chi_{\pi} : \{1, \dots, k\}\rightarrow \{1, \dots, n\}$. From now on, we fix a noncommutative probability space $(\A, \varphi)$. For $n ,N\in\N$, we consider the following objects :
\begin{itemize}
\item A finite family $(x_{j}^{(1)}, \dots, x_{j}^{(n)})_{1\leqslant j\leqslant N}$ of $n$-tuples of \emph{self-adjoint} random variables in $\A$. We will denote by $\M\subset\A$ the algebra that they generate.
\item For every $k\in\N$, a set of permutations $\left\{s_{\chi}\in S_{k} \: \vert\: \chi : \{1, \dots, k\}\rightarrow \{1, \dots, n\}\right\}$. We denote by $\Sigma$ the union of all these sets.
\item Compact quantum groups $\G_{N}^{(1)}, \dots, \G_{N}^{(n)}\subset O_{N}^{+}$ with associated universal $C^{*}$-algebras respectively $C(\G^{(1)}), \dots, C(\G^{(n)})$ and generating matrices respectively $(u_{ij}^{(1)}), \dots, (u_{ij}^{(n)})$ for $1\leqslant i, j\leqslant N$.
\end{itemize}

As before we set, for $I=(i_{1}, \dots, i_{k})$, $J=(j_{1}, \dots, j_{k})$ and  $\chi : \{1, \dots, k\}\rightarrow \{1, \dots, n\}$,
\begin{equation*}
x_{I}^{\chi} = x_{i_{1}}^{\chi(1)}\dots x_{i_{k}}^{\chi(k)} \text{ and } u_{IJ}^{\chi, \Sigma} = u_{i_{s_{\chi}(1)}j_{s_{\chi}(1)}}^{\chi(s_{\chi}(1))}\dots u_{i_{s_{\chi}(k)}j_{s_{\chi}(k)}}^{\chi(s_{\chi}(k))}
\end{equation*}
We are now ready for our definition, which should be compared to Definition \ref{de:permutationaction}. 

\begin{de}\label{de:NInvariance}
Let $\CC$ be a collection of sets $\CC(k)\subset P(k)^{\{1, \dots, n\}}$ of partitions and let $A(\CC)$ be the quotient of the (unital) free product of all the C*-algebras $C(\G^{(k)})$ by the relations
\begin{equation*}
\sum_{\underset{\pi\leqslant\ker(I)}{I}}^{N} u_{IJ}^{\chi_{\pi}, \Sigma} =\delta_{\pi}(J).1
\end{equation*}
for all $\pi\in\CC$ and all $J$. We define a "linear action" $\beta_{N}^{\Sigma} : \M\rightarrow \M\otimes A(\CC)$ by :
\begin{equation*}
\beta_{N}^{\Sigma}(x_{J}^{\chi}) = \sum_{I}^{N} x_{I}^{\chi}\otimes u_{IJ}^{\chi, \Sigma}
\end{equation*}
We say that the family $(x_{j}^{(1)}, \dots, x_{j}^{(n)})_{1\leqslant j\leqslant N}$ is \emph{$A(\CC)$-$\Sigma$-exchangeable}, if $\varphi$ is invariant under $\beta_{N}^{\Sigma}$, i.e.
\begin{equation*}
\varphi(x_{J}^{\chi}).1_{A(\CC)} = \sum_{I}^{N}\varphi(x_{I}^{\chi})u_{IJ}^{\chi, \Sigma}
\end{equation*}
as an equality in $A(\CC)$. An infinite family $(x_{j}^{(1)}, \dots, x_{j}^{(n)})_{ j\in\N}$ is \emph{$A(\CC)$-$\Sigma$-exchangeable}, if $(x_{j}^{(1)}, \dots, x_{j}^{(n)})_{1\leqslant j\leqslant N}$ is $A(\CC)$-$\Sigma$-exchangeable for any $N\in\N$.
\end{de}

Let us emphasize the fact that $A(\CC)$ need not be the universal C*-algebra of a compact quantum group anymore.

It seems like one can make arbitrary combinations of compact quantum groups in Definition \ref{de:NInvariance}. However, requiring some freeness for the tuples may force some of the quantum groups to be identical. We will illustrate this in the case of bi-freeness under an additional assumption. Let us first give a lemma.

\begin{lem}\label{lem:nobifreemixing}
If $\CC$ in the above definition contains the partition $\pi=\{\{1, 2\}\}$ with decoration $\chi = (k_{1}, k_{2})$, then $u_{ij}^{(k_{1})} = u_{ij}^{(k_{2})}$ in $A(\CC)$. In other words, the quantum groups $\G_{N}^{(k_{1})}$ and $\G_{N}^{(k_{2})}$ are identified in $A(\CC)$ if the partition connecting one $k_{1}$-point with one $k_{2}$-point is in $\CC$.
\end{lem}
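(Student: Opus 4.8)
The plan is to unpack the defining relation of $A(\CC)$ for this particular pairing and to recognise it as a matrix identity which, together with the orthogonality of the two generating matrices, forces them to coincide.

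First I would specialise the relation defining $A(\CC)$ to $\pi = \{\{1,2\}\}$ with $\chi = (k_1,k_2)$. Here $k=2$, the condition $\pi\leqslant\ker(I)$ selects exactly the diagonal tuples $I=(i,i)$, and $\delta_\pi(J)=\delta_{j_1 j_2}$. Abbreviating $a_{ij}=u_{ij}^{(k_1)}$, $b_{ij}=u_{ij}^{(k_2)}$ and writing $A=(a_{ij})$, $B=(b_{ij})$, the relation becomes, for all $j_1,j_2$,
\[
\sum_{i=1}^N u_{i j_1}^{\chi(s_\chi(1))} u_{i j_2}^{\chi(s_\chi(2))} = \delta_{j_1 j_2}.1 .
\]
Since $s_\chi\in S_2$ is either the identity or the transposition, the colours appear either in the order $(k_1,k_2)$ or $(k_2,k_1)$, so this identity reads either ${}^t A B=\Id$ or ${}^t B A=\Id$ (using that $\delta_{j_1 j_2}$ is symmetric).

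Next I would bring in orthogonality. As $\G_N^{(k_1)},\G_N^{(k_2)}\subset O_N^+$, the images in $A(\CC)$ of their generating matrices are orthogonal with self-adjoint entries; these are relations internal to each factor $C(\G^{(k_1)})$, $C(\G^{(k_2)})$ and hence survive in the free-product quotient $A(\CC)$. Thus ${}^t A A=A{}^t A=\Id$ and ${}^t B B=B{}^t B=\Id$. In the first case, subtracting yields ${}^t A(B-A)={}^t A B-{}^t A A=0$; multiplying on the left by $A$ and using $A{}^t A=\Id$ gives $B-A=0$, i.e. $u_{ij}^{(k_1)}=u_{ij}^{(k_2)}$ for all $i,j$. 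The second case is identical after exchanging the roles of $A$ and $B$.

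I do not expect a genuine obstacle, as the argument is a short piece of noncommutative linear algebra. The only points requiring care are verifying that the orthogonality relations of each factor persist in the quotient and keeping track of the two possible values of $s_\chi$, both of which lead to the same identification. Spelled out entrywise, the final step is the computation
\[
b_{pq}-a_{pq} = \sum_{i,j} a_{pj}\,a_{ij}\,(b_{iq}-a_{iq}) = \sum_{j} a_{pj}\Big(\sum_{i} a_{ij}(b_{iq}-a_{iq})\Big) = 0,
\]
where the first equality uses row-orthonormality $\sum_j a_{pj}a_{ij}=\delta_{pi}$ and the last uses ${}^t A(B-A)=0$.
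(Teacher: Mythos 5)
Your proof is correct and follows essentially the same route as the paper: specialize the defining relation of $A(\CC)$ at the pair partition to get the mixed orthogonality identity ${}^tAB=\Id$ (or ${}^tBA=\Id$), then combine it with the orthogonality relations inherited from $O_N^+$ to force $A=B$. The paper's proof is the same computation written as a single chain $A=(A\,{}^tA)B=B$ rather than your ``difference is zero'' formulation, so the two arguments differ only cosmetically.
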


\begin{proof}
If $s_{\chi}$ is the identity for $\chi = (k_{1}, k_{2})$, then the partition $\pi$ yields the following relation in $A(\CC)$ :
\begin{equation*}
\sum_{k} u_{ki}^{(k_{1})} u_{kj}^{(k_{2})} = \delta_{ij}
\end{equation*}
If $s_{\chi}$ is the transposition, we simply swap $k_{1}$ and $k_{2}$. Now, since $\G^{(k_{1})}_{N}$ is a quantum subgroup of $O_{N}^{+}$, we have the following relation in $A_{\G^{(1)}}(N)$ (and hence also in $A(\CC)$) :
\begin{equation*}
\sum_l u_{il}^{(k_1)} u_{jl}^{(k_1)}=\delta_{ij}
\end{equation*}
Putting things together, we get
\begin{equation*}
u_{ij}^{(k_{1})} = \sum_{l} u_{il}^{(k_{1})}\delta_{lj} = \sum_{k, l} u_{il}^{(k_{1})}u_{kl}^{(k_{1})}u_{kj}^{(k_{2})} = \sum_{k}\delta_{ik}u_{kj}^{(k_{2})} = u_{ij}^{(k_{2})}.
\end{equation*}
\end{proof}

\begin{prop}\label{prop:nobifreemixing}
Let $(\A, \E, \varepsilon)$ be a $B$-$B$-noncommutative probability space and let $(x_{j}^{\ell}, x_{j}^{r})_{j\in\N}$ be a family of $B$-pairs which are bi-free and identically distributed with amalgamation over $B$. Assume furthermore that there are compact quantum groups $\G_{N}^{\ell}$ and $\G_{N}^{r}$ such that the family of pairs is $A(\CC)$-$\Sigma$-exchangeable for a set of partitions $\CC$. If
\begin{equation*}
\varphi(x_{j}^{\ell}x_{j}^{r}) \neq \varphi(x_{j}^{\ell})\varphi(x_{j}^{r}),
\end{equation*}
then $u_{ij}^{\ell}=u_{ij}^{r}$ in $A(\CC)$.
\end{prop}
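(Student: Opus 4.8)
The plan is to feed the $A(\CC)$-$\Sigma$-exchangeability into the two-point moments coloured by $\chi=(\ell,r)$, distill from it a single quadratic relation between the two generating matrices, and then finish by the computation already carried out in Lemma~\ref{lem:nobifreemixing}. Note that for $\chi=(\ell,r)$ one has $s_\chi=\ii$, so $u_{IJ}^{\chi,\Sigma}=u_{i_1j_1}^{\ell}u_{i_2j_2}^{r}$, and the decorated pair partition $\pi=\{\{1,2\}\}$ with colouring $(\ell,r)$ is exactly the one appearing in that lemma. First I would record the relevant scalar moments. Since the pairs are identically distributed, $\varphi(x_i^{\ell}x_i^{r})$ equals a common value $\alpha:=\varphi(x_1^{\ell}x_1^{r})$ for every $i$. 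For $i_1\neq i_2$, bi-freeness forces the mixed cumulant $\kappa^{(\ell,r)}_{1_{(\ell,r)}}(x_{i_1}^{\ell},x_{i_2}^{r})$ to vanish by Theorem~\ref{thm:vanishingcumulants}, so that the $B$-valued moment factorizes; combined with the compatibility of $\varphi$ with $\E$ (a left and a right variable from distinct pairs behaving as classically independent), the scalar moment $\varphi(x_{i_1}^{\ell}x_{i_2}^{r})$ equals the constant $\gamma:=\varphi(x_1^{\ell})\varphi(x_1^{r})$. The hypothesis of the proposition is precisely $\alpha\neq\gamma$.

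I would then write out the exchangeability equation for $\chi=(\ell,r)$ and an arbitrary pair $J=(j_1,j_2)$, splitting the double sum into its diagonal and off-diagonal parts:
\begin{equation*}
\varphi(x_{j_1}^{\ell}x_{j_2}^{r}).1=\sum_{i_1,i_2=1}^{N}\varphi(x_{i_1}^{\ell}x_{i_2}^{r})\,u_{i_1j_1}^{\ell}u_{i_2j_2}^{r}=(\alpha-\gamma)\sum_{i=1}^{N}u_{ij_1}^{\ell}u_{ij_2}^{r}+\gamma\Big(\sum_{i=1}^{N}u_{ij_1}^{\ell}\Big)\Big(\sum_{i=1}^{N}u_{ij_2}^{r}\Big).
\end{equation*}
The last term collapses: the one-point exchangeability equations for the colourings $(\ell)$ and $(r)$ read $\varphi(x_1^{\ell})\sum_{i}u_{ij_1}^{\ell}=\varphi(x_1^{\ell}).1$ and $\varphi(x_1^{r})\sum_{i}u_{ij_2}^{r}=\varphi(x_1^{r}).1$, whence $\gamma\big(\sum_i u_{ij_1}^{\ell}\big)\big(\sum_i u_{ij_2}^{r}\big)=\big(\varphi(x_1^{\ell})\sum_i u_{ij_1}^{\ell}\big)\big(\varphi(x_1^{r})\sum_i u_{ij_2}^{r}\big)=\gamma.1$, independently of whether the individual means vanish. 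Since $\varphi(x_{j_1}^{\ell}x_{j_2}^{r})=\gamma+(\alpha-\gamma)\delta_{j_1j_2}$, subtracting $\gamma.1$ and dividing by the nonzero scalar $\alpha-\gamma$ leaves
\begin{equation*}
\sum_{i=1}^{N}u_{ij_1}^{\ell}u_{ij_2}^{r}=\delta_{j_1j_2}.1
\end{equation*}
in $A(\CC)$. This is exactly the relation from which the proof of Lemma~\ref{lem:nobifreemixing} deduced $u^{(k_1)}=u^{(k_2)}$ using only the orthogonality of the matrices; running that computation with $(k_1,k_2)=(\ell,r)$, which is available because $\G_N^{\ell},\G_N^{r}\subset O_N^{+}$, yields $u_{ij}^{\ell}=u_{ij}^{r}$.

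The two points I would treat with care are the evaluation of the off-diagonal two-point moment as $\varphi(x_1^{\ell})\varphi(x_1^{r})$ and the collapse of the cross term $\gamma\big(\sum_i u_{ij_1}^{\ell}\big)\big(\sum_i u_{ij_2}^{r}\big)$. The former is where bi-freeness and the compatibility of $\varphi$ with $\E$ genuinely enter. The latter is the main obstacle: because $\G_N^{\ell}$ and $\G_N^{r}$ are only assumed to be quantum subgroups of $O_N^{+}$, their generating matrices need \emph{not} be magic unitaries and the row sums $\sum_i u_{ij}^{\ell}$, $\sum_i u_{ij}^{r}$ need not equal $1$; the collapse therefore cannot be read off from a defining relation of $A(\CC)$ and must instead be routed through the one-point invariance by splitting $\gamma=\varphi(x_1^{\ell})\varphi(x_1^{r})$ onto the two factors. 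Once this relation is in hand, the conclusion is immediate from Lemma~\ref{lem:nobifreemixing}.
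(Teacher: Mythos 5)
Your proof is correct and reaches the paper's conclusion by the same overall strategy --- extract the pair-partition relation $\sum_{i} u^{\ell}_{ij_{1}}u^{r}_{ij_{2}} = \delta_{j_{1}j_{2}}.1$ from invariance of two-point mixed moments, then finish with Lemma \ref{lem:nobifreemixing} --- but the mechanics are genuinely different. The paper centers the variables, setting $y_{j}^{\chi} = x_{j}^{\chi} - \E(x_{j}^{\chi})$, and applies invariance to the \emph{$B$-valued} moments $\E(y_{j_{1}}^{\ell}y_{j_{2}}^{r})$: the off-diagonal centered moments vanish by bi-freeness, so the double sum collapses at once to $\alpha\otimes\sum_{m}u^{\ell}_{mj_{1}}u^{r}_{mj_{2}}$ and there is no cross term to deal with. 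You instead work with uncentered scalar moments and the literal hypothesis (invariance of $\varphi$), split the sum into diagonal and off-diagonal parts by hand, and collapse the cross term through the one-point invariance equations; your observation that $\gamma\bigl(\sum_{i}u^{\ell}_{ij_{1}}\bigr)\bigl(\sum_{i}u^{r}_{ij_{2}}\bigr) = \bigl(\varphi(x_{1}^{\ell})\sum_{i}u^{\ell}_{ij_{1}}\bigr)\bigl(\varphi(x_{1}^{r})\sum_{i}u^{r}_{ij_{2}}\bigr) = \gamma.1$ holds whether or not the individual means vanish is exactly the right way to make that step robust. Your route has a real advantage: $A(\CC)$-$\Sigma$-exchangeability is a statement about $\varphi$, and the paper's assertion that the \emph{centered} family is invariant at the level of $\E$ (dismissed as ``easy to check'') is in fact a strengthening of that hypothesis, which your argument never needs.

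One caveat, which you flagged yourself as the point where ``bi-freeness and compatibility genuinely enter'': the identity $\varphi(x_{i_{1}}^{\ell}x_{i_{2}}^{r}) = \varphi(x_{1}^{\ell})\varphi(x_{1}^{r})$ for $i_{1}\neq i_{2}$ does not quite follow from bi-freeness plus compatibility when $B\neq\C$. Bi-freeness gives the $B$-valued factorization $\E(x_{i_{1}}^{\ell}x_{i_{2}}^{r}) = \E(x_{i_{1}}^{\ell})\E(x_{i_{2}}^{r})$, and compatibility then gives $\varphi(x_{i_{1}}^{\ell}x_{i_{2}}^{r}) = \varphi\circ\varepsilon\bigl(1\otimes\E(x_{1}^{\ell})\E(x_{1}^{r})\bigr)$; passing from this to the product $\varphi(x_{1}^{\ell})\varphi(x_{1}^{r})$ requires $\varphi\circ\varepsilon(1\otimes\cdot)$ to be multiplicative on the relevant elements of $B$, which is automatic only in the scalar case. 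You should note, however, that the paper's own proof has precisely the same issue: it deduces $\alpha = \E(y_{j}^{\ell}y_{j}^{r})\neq 0$ from the hypothesis $\varphi(x_{j}^{\ell}x_{j}^{r})\neq\varphi(x_{j}^{\ell})\varphi(x_{j}^{r})$, and since $\alpha = \E(x_{j}^{\ell}x_{j}^{r}) - \E(x_{j}^{\ell})\E(x_{j}^{r})$, the contrapositive needs the same multiplicativity. So this is an imprecision inherited from the statement itself (which never specifies how $\varphi$ relates to $\E$), not a defect of your approach relative to the paper's; your proof is at the same level of rigor and somewhat more faithful to the stated assumptions.
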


\begin{proof}
Set $y_{j}^{\ell} = x_{j}^{\ell} - \E(x_{j}^{\ell}) \text{ and } y_{j}^{r} = x_{j}^{r} - \E(x_{j}^{r})$ and let $N\in\mathbb{N}$. We are going to prove that the relations of $\pi = \{\{1, 2\}\}$ with decoration $\chi = (\ell, r)$ are fulfilled for $u_{ij}^{\ell}$ and $u_{ij}^{r}$ from which, by Lemma \ref{lem:nobifreemixing}, the assertion will follow. Set $\alpha =\E(y_{j}^{\ell}y_{j}^{r})$ which is independent from $j$ by the identical distribution assumption. Note that $\E(y_{j_{1}}^{\ell}y_{j_{2}}^{r}) = \delta_{j_{1}j_{2}}\alpha$ since $\E(y_{j_{1}}^{\ell}y_{j_{2}}^{r}) = \E(y_{j_{1}}^{\ell})\E(y_{j_{2}}^{r}) = 0$ for $j_{1}\neq j_{2}$ by independence. It is easy to check that also $(y_{j}^{\ell}, y_{j}^{r})_{j}$ is invariant under the $(\G_{N}^{\ell}, \G_{N}^{r})$-action, hence
\begin{eqnarray*}
\delta_{j_{1}j_{2}}\alpha\otimes 1 & = & \E(y_{j_{1}}^{\ell}y_{j_{2}}^{r})\otimes 1 \\
& = & \sum_{i_{1}, i_{2}}\E(y_{i_{1}}^{\ell}y_{i_{2}}^{r})\otimes u_{i_{1}j_{1}}^{\ell}u_{i_{2}j_{2}}^{r} \\
& = & \sum_{i_{1}\neq i_{2}}\E(y_{i_{1}}^{\ell}y_{i_{2}}^{r})\otimes u_{i_{1}j_{1}}^{\ell}u_{i_{2}j_{2}}^{r} + \sum_{m}\E(y_{m}^{\ell}y_{m}^{r})\otimes u_{mj_{1}}^{\ell}u_{mj_{2}}^{r}\\
& = & \alpha\otimes\sum_{m} u_{mj_{1}}^{\ell}u_{mj_{2}}^{r}
\end{eqnarray*}
Since $\E$ is $\varphi$-invariant, $\alpha$ must be non-zero so that we can conclude by Lemma \ref{lem:nobifreemixing}. 
\end{proof}

Here are some examples of $A(\CC)$-$\Sigma$-invariance.

\begin{prop}
Let $(\mathcal A,\varphi)$ be a noncommutative probability space.
\begin{itemize}
\item[(a)] Let $(x_{j})_{j\in\N}$ be a family of self-adjoint random variables in $\A$. Let $s_{\chi} = e$ (the identity permutation in $S_{k}$) for all $\chi$, let $\G_{N}^{(1)} = S_{N}^{+}$ for all $N$ and let $\CC$ be empty. Then, $(x_{j})_{j\in\N}$ is $A(\CC)$-$\Sigma$-exchangeable if and only if it is quantum exchangeable (in the sense of \cite{kostler2009noncommutative}) if and only if $(x_{j})_{j\in\N}$ is free and identically distributed with amalgamation over the tail algebra.
\item[(b)] Let $(x_{j}^{\ell}, x_{j}^{r})_{j\in\N}$ be a family of pairs of random variables in $\A$. Let $s_{\chi}$ be defined as in Definition \ref{de:binoncrossing}, let $\G_{N}^{(1)} = \G_{N}^{(2)} = S_{N}^{+}$ for all $N$ and let $\CC$ be given by all bi-noncrossing partitions. Then, $(x_{j}^{\ell}, x_j^{r})_{j\in\N}$ is $A(\CC)$-$\Sigma$-exchangeable if and only if it is quantum bi-exchangeable (in the sense of Definition \ref{de:quantumbiexchangeable}).
\item[(c)] Let $(x_{j}^{(1)}, \dots, x_{j}^{(n)})_{j\in\N}$ be a family of self-adjoint random variables in $\A$. Let $s_{\chi} = e$ (the identity permutation in $S_{k}$) for all $\chi$, let $\G_{N}^{(k)} = S_{N}^{+}$ for all $N$ and all $k$ and let $\CC$ be empty. Then, $(x_{j}^{(1)}, \dots, x_{j}^{(n)})_{j\in\N}$ is $A(\CC)$-$\Sigma$-exchangeable if and only if all the random variables $(x_{j}^{(k)})_{j\in \N, 1\leqslant k\leqslant n}$ are free with amalgamation over the tail algebra and their distribution only depends on $k$.
\end{itemize}
\end{prop}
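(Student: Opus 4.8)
The plan is, for each of the three items, to first make the algebra $A(\CC)$ and the action $\beta_{N}^{\Sigma}$ of Definition \ref{de:NInvariance} completely explicit, and then to recognize the resulting invariance as one for which a de Finetti statement is already available. For (a), with $n=1$ the free product is a single copy of $C(S_{N}^{+})$ and, as $\CC=\emptyset$, no relations are imposed, so $A(\CC)=C(S_{N}^{+})$. Since $s_{\chi}=e$ for every $\chi$, one has $u^{\chi,\Sigma}_{IJ}=u_{i_{1}j_{1}}\cdots u_{i_{n}j_{n}}$, so that $\beta_{N}^{\Sigma}$ is exactly the standard multiplicative action of $S_{N}^{+}$. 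Hence $A(\CC)$-$\Sigma$-exchangeability is quantum exchangeability in the sense of \cite{kostler2009noncommutative}, and the remaining equivalence with freeness and identical distribution with amalgamation over the tail algebra is precisely \cite[Thm 1.1]{kostler2009noncommutative}.

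For (b), I would first identify the two copies of $C(S_{N}^{+})$. The two-block partition $\{\{1,2\}\}$ with decoration $(\ell,r)$ is bi-noncrossing, hence lies in $\CC$; since $\G^{(\ell)}_{N}=\G^{(r)}_{N}=S_{N}^{+}\subset O_{N}^{+}$, Lemma \ref{lem:nobifreemixing} forces $u^{(\ell)}_{ij}=u^{(r)}_{ij}=:u_{ij}$ in $A(\CC)$. Once the two matrices coincide, every remaining defining relation, indexed by some $\pi\in BNC(\chi)$, becomes after the substitution $I'=s_{\chi}(I)$, $J'=s_{\chi}(J)$ the identity
\[
\sum_{I':\,s_{\chi}^{-1}(\pi)\leqslant\ker(I')}u_{I'J'}=\delta_{s_{\chi}^{-1}(\pi)}(J')\,1,
\]
which already holds in $C(S_{N}^{+})$ by Proposition \ref{prop:vanishingsn}, as $s_{\chi}^{-1}(\pi)$ is noncrossing and $\delta_{s_{\chi}^{-1}(\pi)}(s_{\chi}(J))=\delta_{\pi}(J)$. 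Thus $A(\CC)$ is generated by a single matrix $u$ subject only to the relations \eqref{eq:defsn}, giving $A(\CC)\cong C(S_{N}^{+})$, and under this isomorphism $\beta_{N}^{\Sigma}(x_{J}^{\chi})=\sum_{I}x_{I}^{\chi}\otimes u_{s_{\chi}(I)s_{\chi}(J)}=\sum_{I}x_{I}^{\chi}\otimes u^{\chi}_{IJ}$ is exactly the bi-free quantum permutation action $\beta_{N}$ of Definition \ref{de:permutationaction}. Therefore $A(\CC)$-$\Sigma$-exchangeability coincides with quantum bi-exchangeability (Definition \ref{de:quantumbiexchangeable}).

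For (c), unwinding the definitions gives $A(\CC)=C(S_{N}^{+})\ast\cdots\ast C(S_{N}^{+})$ ($n$ free copies, no relations), and since $s_{\chi}=e$ each colour is permuted by its own free factor: $\beta_{N}^{\Sigma}(x_{J}^{\chi})=\sum_{I}x_{I}^{\chi}\otimes u^{(\chi(1))}_{i_{1}j_{1}}\cdots u^{(\chi(n))}_{i_{n}j_{n}}$. For the easy implication, assuming the variables are free with amalgamation over the tail algebra with distribution depending only on $k$, one expands $\varphi(x_{I}^{\chi})$ into free cumulants; these vanish unless every block is constant in both index and colour, and summing over $I$ each such block involves only generators of a single free factor, so Proposition \ref{prop:vanishingsn} applied inside that factor reproduces $\varphi(x_{J}^{\chi})\,1$, exactly as in the proof of Proposition \ref{prop:bifinettieasy}.

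For the converse implication of (c), I would reuse the analytic setup of Subsection \ref{subsec:quantumbiinvariant} (a von Neumann algebra, a non-degenerate normal state, and the tail algebra and conditional expectation of \cite{dykema2014quantum}), and then extract freeness by pushing the given $A(\CC)$-invariance through two families of $*$-homomorphisms out of $A(\CC)$: the diagonal map $u^{(k)}_{ij}\mapsto u_{ij}$, which produces quantum exchangeability of the sequence of $n$-tuples and hence, by \cite{kostler2009noncommutative} in the operator-valued form of \cite{curran2009quantum}, freeness across the index $j$ together with identical distribution of the tuples; and, for each fixed $k_{0}$, the map $u^{(k_{0})}_{ij}\mapsto u_{ij}$, $u^{(k)}_{ij}\mapsto\delta_{ij}1$ for $k\neq k_{0}$, which expresses that colour $k_{0}$ is quantum exchangeable while the other colours are held fixed, hence free from them over the tail algebra. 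The hard part will be to assemble these partial statements into full mutual freeness of the family $(x_{j}^{(k)})$ over the single tail algebra $B$: one has to reconcile the various amalgamation bases coming from the single-colour reductions and argue that freeness holds over $B$ itself, all of this while working only on the dense $*$-subalgebra on which the non-normal conditional expectation is defined, exactly the delicate point already met in Subsection \ref{subsec:quantumbiinvariant}.
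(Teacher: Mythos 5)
Your treatment of (a), of (b), and of the easy implication in (c) is correct and follows the same route as the paper. In (b) you are in fact more careful than the paper: after Lemma~\ref{lem:nobifreemixing} identifies the two copies of $C(S_{N}^{+})$, one must still check that the relations indexed by the remaining bi-noncrossing partitions do not collapse $C(S_{N}^{+})$ any further, since otherwise $A(\CC)$ would be a proper quotient of $C(S_{N}^{+})$ and $A(\CC)$-$\Sigma$-exchangeability would a priori be strictly weaker than quantum bi-exchangeability. Your verification of these relations through the substitution $I'=s_{\chi}(I)$ and Proposition~\ref{prop:vanishingsn} supplies exactly this point, which the paper passes over in silence.

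The genuine gap is in the converse implication of (c), and you have flagged it yourself. Your two families of quotients of $A(\CC)$ give: (i) via the diagonal map and \cite[Thm 1.1]{curran2009quantum}, freeness and identical distribution of the tuple algebras $A_{j}$ with amalgamation over the tail algebra $B$; (ii) via the counit-type maps, invariance under quantum permutations of each colour class with the other colours held fixed, hence in particular freeness and identical distribution of $(x_{j}^{(k)})_{j}$ over a tail algebra for each fixed $k$. But the statement requires mutual freeness over $B$ of \emph{all} the variables $(x_{j}^{(k)})_{j,k}$, and in particular freeness over $B$ of the different colours \emph{within one tuple}; this does not follow from (i) and (ii). Concretely, let $y$ be a standard semicircular element, let $z=y^{3}$, and let $(x_{j}^{(1)},x_{j}^{(2)})_{j}$ be free i.i.d.\ copies of the pair $(y,z)$: then each colour class is free and i.i.d., the algebras $A_{j}$ are free and identically distributed, the tail algebra is trivial, yet $x_{1}^{(1)}$ and $x_{1}^{(2)}$ are not free, since $\varphi(y\cdot y^{3})=2\neq 0=\varphi(y)\varphi(y^{3})$. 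So any complete proof must extract the cross-colour vanishing of mixed cumulants from the invariance in the full free product, beyond what your two quotients record separately --- for instance by fully exploiting the spectator quotients of (ii), which are the free analogue of conditional independence from fixed spectators, or by a direct K\"ostler--Speicher-type cumulant argument using the structure of alternating words in the free product of the $n$ copies of $C(S_{N}^{+})$. You should also be aware that the paper's own proof of this direction stops at essentially the same point: it consists of your quotient (i) together with the restriction of the invariance to single-colour monomials, and it never addresses freeness between colours inside a tuple; your spectator quotients are an additional idea, absent from the paper, and are plausibly the right tool to close this gap, but neither your proposal nor the paper carries that step out.
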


\begin{proof}
(a) The action $\beta_{N}^{\Sigma}$ boils down to the usual action of $S_{N}^{+}$ as considered in \cite{kostler2009noncommutative} (note that for this free de Finetti theorem, in fact only a linear action is needed).

(b) By Lemma \ref{lem:nobifreemixing}, the two copies of $S_{N}^{+}$ in $A(\CC)$ coincide and $A(\CC)$-$\Sigma$-exchangeability yields exactly quantum bi-exchangeability.

(c) Since $\CC = \emptyset$, $A(\CC)$ is the free product $C(\G^{(1)})\ast \dots\ast C(\G^{(n)})$. Fixing a $k\in\{1, \dots, n\}$, the action $\beta_{N}^{\Sigma}$ restricted to variables $(x_{j}^{(k)})_{j\in\N}$ amounts to the same actions as in (a). This proves that for a fixed $k$, the variables are free and identically distributed over some "$k$-tail algebra" $B^{(k)}$, hence, 
they are free and identically distributed with amalgamation over the tail algebra $B$. Let $A_{j}$ be the algebra generated by $x_{j}^{(k)}$ for all $k$ and let $\rho_{j} : \C\langle X_{1}, \dots, X_{n}, X_{1}^{*}, \dots, X_{n}^{*}\rangle \rightarrow A_{j}$ be the unique $*$-homomorphism satisfying $\rho_{j}(X_{k}) = x_{j}^{(k)}$. Considering the quotient of $A(\CC)$ obtained by identifying all the copies of $S_{N}^{+}$, we see that $\varphi$ is invariant under quantum permutations of the tuples. This is equivalent to saying that $(\rho_{j})_{j}$ is a quantum exchangeable sequence, hence by \cite[Thm 1.1]{curran2009quantum} the algebras $A_{j}$ are free and identically distributed with amalgamation over the tail algebra $B$. On the other hand, assume that $(x_{j}^{(k)})_{j\in \N, 1\leqslant k\leqslant n}$ is free with amalgamation over the tail algebra and that the distribution only depends on $k$. Using all these properties, we can compute, for any $I$, $J$ and $\chi$,
\begin{eqnarray*}
\sum_{I}^{N}\varphi(x_{I}^{\chi})u_{IJ}^{\chi, \Sigma} & = & \sum_{I}^{N}\sum_{\pi\in NC}\kappa_{\pi}(x_{I}^{\chi})u_{IJ}^{\chi, \Sigma} \\
& = & \sum_{\underset{\pi\leqslant \ker(\chi)}{\pi\in NC}}\sum_{\underset{\pi\leqslant \ker(I)}{I}}^{N}\kappa_{\pi}(x_{I}^{\chi})u_{IJ}^{\chi, \Sigma} \\
& = & \sum_{\underset{\pi\leqslant \ker(\chi)}{\pi\in NC}}\kappa_{\pi}^{\chi}\sum_{\underset{\pi\leqslant \ker(I)}{I}}^{N}u_{IJ}^{\chi, \Sigma}
\end{eqnarray*}
where $\kappa_{\pi}$ denotes the usual noncrossing cumulants. Because $\Sigma$ only contains identity permutations, we have
\begin{equation*}
\sum_{\underset{\pi\leqslant \ker(I)}{I}}^{N}u_{IJ}^{\chi, \Sigma} = \sum_{\underset{\pi\leqslant \ker(I)}{I}}^{N}u_{i_{1}j_{1}}^{\chi(1)}\dots u_{i_{n}j_{n}}^{\chi(n)}.
\end{equation*}
Since $\pi\leqslant \ker(\chi)$, the latter sum is equal to $\delta_{\pi\leqslant \ker(J)}$ by definition of the free product. Writing $\varphi(x_{J}^{\chi})$ as a sum of cumulants, we then see that $\varphi$ is invariant.
\end{proof}

Based on these constructions and remarks, we see that the combinatorics of $n$-freeness should be governed by the set $NC^{\Sigma}(\chi)$ of partitions $\pi\in P^{\{1, \dots, n\}}$ such that $s_{\chi}^{-1}(\pi)$ is noncrossing. Given $\Sigma$, one can define expectation functionals and, by Möbius inversion, cumulants satisfying the expected identities at least in the scalar-valued case. However, going to the operator-valued setting requires understanding how elements of the $n$ copies of the algebra $B$ behave with respect to these functionals, which is quite unclear for the moment. The best way to understand these properties is to have an operator model and it may be that one has to impose conditions on the set of permutations $\Sigma$ for such an operator model to exist, even when $B = \C$.

\bibliographystyle{amsplain}
\bibliography{../../quantum}

\end{document}